\numberwithin{equation}{section}
\numberwithin{figure}{section}
\definecolor {processblue}{cmyk}{0.96,0,0,0}
\def\theenumi{\@alph\c@enumi}
\theoremstyle{plain}
\newtheorem{theorem}[equation]{Theorem}
\newtheorem{lemma}[equation]{Lemma}
\newtheorem{corollary}[equation]{Corollary}
\newtheorem{proposition}[equation]{Proposition}
\theoremstyle{definition}
\newtheorem{remark}[equation]{Remark}
\newtheorem{example}[equation]{Example}
\newtheorem{definition}[equation]{Definition}
\newtheorem{notation}[equation]{Notation}
\newtheorem{discussion}[equation]{Discussion}
\newtheorem{observation}[equation]{Observation}
\newtheorem{construction}[equation]{Construction}
\newcommand{\GL}{\mathrm{GL}}
\newcommand{\sign}[1]{\mathrm{sgn(#1)}}
\newcommand{\rcf}[2]{\mathcal{R}_{\scriptscriptstyle #1,#2}}
\newcommand{\stdcontent}[2]{S(#1,#2)}
\newcommand{\tabcontent}[2]{T(#1,#2)}
\newcommand{\fillcontent}[2]{F(#1,#2)}
\newcommand{\fillcontentc}[2]{F^c(#1,#2)}
\newcommand{\rbasis}[1]{D_{#1}}
\newcommand{\reval}[1]{\mathfrak{R}_{-,#1}}
\newcommand{\kostka}[2]{K_{\scriptscriptstyle #1,#2}}
\newcommand{\perm}[2]{#1(#2)}
\newcommand{\ycs}[2]{C_{\scriptscriptstyle #2}(#1)}
\newcommand{\sorting}[1]{\operatorname{sort}(#1)}
\newcommand{\rowsorting}[1]{\operatorname{rowsort}(#1)}
\title{A non-iterative formula for straightening fillings of Young diagrams}
\author{Reuven Hodges}
\address{Department of Mathematics, University of Kansas, Lawrence, KS 66045, USA}
\email{rmhodges@ku.edu}
\begin{document}

\begin{abstract}
Young diagrams are fundamental combinatorial objects in representation theory and algebraic geometry. Many constructions that rely on these objects depend on variations of a straightening process that expresses a filling of a Young diagram as a sum of semistandard tableaux subject to certain relations. This paper solves the long standing open problem of giving a non-iterative formula for straightening a filling. We apply our formula to give a complete generalization of a theorem of Gonciulea and Lakshmibai. 
\end{abstract}
\maketitle

\section{Introduction} 
\label{sec:intro}
The textbook constructions of the irreducible representations of the symmetric group and the polynomial irreducible representations of the general linear group rely on a process called straightening. This straightening process expresses a filling of a Young diagram as a sum of semistandard tableaux subject to certain relations. The aim of this paper is to provide a non-iterative formula for straightening a filling, as well as a combinatorial description of the coefficients that appear. 

As far as the author is aware, all previously discovered straightening algorithms (\cite{MR1035487}, \cite[\S 2.6]{MR1824028}, \cite[\S 3.1]{MR2667486}, \cite[\S 15.5]{MR1153249}) are inductive. Consequently, they give little combinatorial control over the coefficients that arise when straightening. For example, in \cite[Theorem 5.2]{MR1417711}, Gonciulea-Lakshmibai perform a technical analysis of an inductive straightening algorithm to show that a specific coefficient is nonzero when straightening a tableau with shape equal to a two column rectangular Young diagram. This is the key result that the authors employ to show that Schubert varieties in a miniscule flag variety degenerate to normal toric varieties.  An immediate corollary of our non-iterative formula is a generalization of their theorem to fillings of arbitrary shape.

\subsection{Non-iterative straightening}
Let $R$ be a commutative ring. Define $\fillcontent{\lambda}{z}$ to be the set of fillings of the Young diagram of shape $\lambda$ with content $z$, where the \emph{content} of a filling is a composition that records the number of occurrences of each value in the filling. Then $R^{\fillcontent{\lambda}{z}}$ is defined to be the free $R$-module generated by $\fillcontent{\lambda}{z}$. Let $A(\lambda,z)$ be the submodule of $R^{\fillcontent{\lambda}{z}}$ generated by the Grassmannian and Pl\"{u}cker relations (see Section \ref{subsec:factSpace}). The equivalence classes of semistandard tableaux form a basis of the quotient module $R^{\fillcontent{\lambda}{z}} / A(\lambda,z)$. Expressing a filling as a $\mathbb{Z}$-linear combination of semistandard tableaux in this quotient is called \emph{straightening} the filling.

Given a partition $\lambda$ we consider the group $\ycs{\lambda}{}$ of permutations that permute the entries of a given filling of shape $\lambda$ within each column. Let $F$ and $S$ be two fillings of shape $\lambda$. For $\underline{\pi} \in \ycs{\lambda}{}$ we define $F_{\underline{\pi}}$ to be the filling that results from permuting the entries of $F$ according to $\underline{\pi}$. The \emph{rearrangement coefficient} of $F$ with respect to $S$, denoted $\rcf{F}{S}$, is the sum of the signs of all $\underline{\pi} \in \ycs{\lambda}{}$ such that $F_{\underline{\pi}}$ has the same content as $S$ in each row. The fundamental insight of this paper is that the straightening of a filling depends only on these rearrangement coefficients.   

Let $\stdcontent{\lambda}{z}$ be the subset of $\fillcontent{\lambda}{z}$ containing the semistandard tableaux of shape $\lambda$ and content $z$. Order and label all semistandard tableaux in $\stdcontent{\lambda}{z}$  as $S_1 \succ S_2 \succ \cdots \succ S_{\kostka{\lambda}{z}}$ where $\kostka{\lambda}{z}$ is the Kostka number and $\succ$ is the reading word order (see Definition \ref{definition:rwCwOrder}). We recursively construct a new basis of $R^{\fillcontent{\lambda}{z}} / A(\lambda, z)$, called the \emph{D-basis}, by setting
\begin{center}
$\rbasis{S_{i}} := S_{i} - \displaystyle \sum_{\substack{S_j \in \stdcontent{\lambda}{z} \\ \textrm{such that } j < i}} \rcf{S_{i}}{S_j} \cdot \rbasis{S_j}$
\end{center}
for each $S_i \in \stdcontent{\lambda}{z}$. Our main result is that straightening a filling in terms of this new basis is considerably simpler.  

\begin{theorem}
\label{theorem:mainTheoremStr1In}
Let $F \in \fillcontent{\lambda}{z}$, with $F=\sum_ {S_i \in \stdcontent{\lambda}{z}}  a_i S_i$ in $R^{\fillcontent{\lambda}{z}} / A(\lambda, z)$. Then
\begin{center}
$\displaystyle \sum a_i S_i = \displaystyle \sum_{S_j \in \stdcontent{\lambda}{z}} \rcf{F}{S_j} \cdot \rbasis{S_j}$
\end{center}
\end{theorem}

If $F=\sum_ {S_i \in \stdcontent{\lambda}{z}}  a_i S_i$ is nonzero, we say that the \emph{leading term} of $F$ is the $S_k \in \stdcontent{\lambda}{z}$ such that $a_k \neq 0$ and $a_{\ell} = 0$ for all $\ell > k$.  As alluded to above, in \cite[Theorem 5.2]{MR1417711}, Gonciulea and Lakshmibai prove that the coefficient of the leading term is equal to 1 for a tableau whose shape is a two column rectangle. We give a generalization of this result to any filling of any shape. Let $\sorting{F}$ be the filling that arises from $F$ by first reordering the values within the columns of $F$ so that they increase weakly downwards, and then reordering the values within each row so that they increase weakly left to right. If $F$ has no duplicated values in each of its columns then we call it a \emph{cardinal filling}, and Lemma~\ref{lemma:sortingSST} implies that $\sorting{F}$ is a semistandard tableau.

\begin{corollary}
\label{corollary:mainCorollaryStr2}
Let $F \in \fillcontent{\lambda}{z}$ be a cardinal filling, with $\sorting{F}=S_k \in \stdcontent{\lambda}{z}$. If $F\!=\!\sum_ {S_i \in \stdcontent{\lambda}{z}}  a_i S_i$ in $R^{\fillcontent{\lambda}{z}} / A(\lambda, z)$, then $S_k$ is the leading term.\! Further,\! $a_k\!=\!\sign{\underline{\sigma}}$ where $\underline{\sigma}$ is the unique permutation in $C(\lambda)$ such that $F_{\underline{\sigma}}$ is a tableau. If $F$ is a tableau, $a_k\!=\!1$.
\end{corollary}

\subsection{The homogeneous coordinate rings of flag varieties}
Straightening plays an important role in understanding the homogeneous coordinate rings of flag varieties and their Schubert subvarieties. In 1943, Hodge\cite{MR0007739} gave a basis for the homogeneous coordinate rings of the complex Grassmannian and its Schubert subvarieties in terms of Young diagrams and tableaux whose proof relied on straightening. In particular, straightening describes multiplication in this ring. Then, in the 1970s, Seshadri, Lakshmibai, and Musili began to work on extending the results of Hodge to all varieties of the form $G/P$, where $G$ is a semisimple algebraic group and $P$ is a parabolic subgroup. They were successful for $G$ of classical type, and formulated several conjectures for a general semisimple $G$ that were proved by Littelmann in \cite{MR1670770}. Straightening played a crucial role in the development of this theory and a more detailed account may be found in \cite{MR2388163}.

\subsection{Complexity}
Straightening a filling using Theorem~\ref{theorem:mainTheoremStr1In} is a two step process. The first step is computing the $D$-basis associated to the shape and content, and the second is computing at most $\kostka{\lambda}{z}$ rearrangement coefficients. Even better, the $D$-basis computation only depends on the shape and content, and hence straightening multiple fillings of the same shape and content is computationally efficient. 

While preparing this paper, the author implemented several traditional straightening algorithms as well as this non-iterative method in the programming language C and significant heuristic evidence suggests that the non-iterative method is several orders of magnitude faster. In light of this, we plan to study the computational complexity of straightening in a subsequent work.

One of the most frequent computational applications of straightening is to decide if two vectors in the quotient module are linearly independent (see \cite[Remark 12]{MR3574536}). Given two fillings $E, F \in \fillcontent{\lambda}{z}$, their coordinate vectors with respect to the $D$-basis may be found by computing rearrangement coefficients. The linear independence of these vectors may be discerned via their coordinate vectors. Importantly, all of this can all be achieved without ever explicitly computing the $D$-basis.

\subsection{Organization}
The content of the paper is divided as follows. Section 2 gives the necessary background on partitions, Young diagrams, fillings, and semistandard tableaux. These definitions are used to give a construction of the quotient module, and this in turn is used to give an explicit construction of the irreducible representations of the symmetric and general linear groups. In Section 3 the rearrangement coefficients and the D-basis are introduced and a number of simple lemmas related to these are proved. The rest of the section is devoted to proving that for each $S \in \stdcontent{\lambda}{z}$ there exists a $R$-module homomorphism $\reval{S}:R^{\fillcontent{\lambda}{z}} / A(\lambda, z) \rightarrow R$ that maps $F \in F(\lambda, z)$ to $\rcf{F}{S}$. The primary results of the paper are in Section 4. Proposition \ref{proposition:mainStraighten} is proved using the results from the previous sections and is then used to show the main result, Theorem \ref{theorem:mainTheoremStr1In}. The paper concludes by applying Theorem \ref{theorem:mainTheoremStr1In} to give a combinatorial description of the straightening coefficients and visualization of these coefficients in terms of paths in a directed graph.

\section*{Acknowledgements}
The author would like to thank Venkatramani Lakshmibai and Alexander Yong for helpful discussions, comments, and suggestions. The author would also like to thank the anonymous referee for suggestions that significantly simplified the main argument and improved the overall clarity.

\section{Preliminaries}
\label{sec:prelim}
In this section we recall a number of standard definitions and results on partitions, Young diagrams, fillings, and straightening.  We follow the formulation in \cite[Chapter 4.1]{phdthesisCIkenmeyer}, for more details and proofs see \cite{MR1464693} and \cite{MR1153249}.

\subsection{Partitions and Fillings}
\label{subsec:partsFillings}
Fix $n\geq 2$ and let $[n]$ denote the set $\{ 1,\ldots,n \}$. The set of nonnegative integers will be denoted $\mathbb{N}$ while the set of positive integers will be denoted $\mathbb{N}_{>0}$.

A \emph{partition} is a finite sequence of positive integers $\lambda \coloneqq (\lambda_1,\ldots,\lambda_k)$ such that $\lambda_1 \geq \cdots \geq \lambda_k > 0$. We define the \emph{size} of the partition $\lambda$ as $|\lambda|=\sum \lambda_i$ and say that the \emph{length}, denoted $l(\lambda)$, of the partition is the number $k$ of values in the sequence. A partition $\lambda$ is often visualized using its \emph{Young diagram}, also denoted $\lambda$, an upper left justified collection of boxes, with $\lambda_i$ boxes in the $i$th row. We identify partitions with their Young diagrams and say that the Young diagram associated to the partition $\lambda$ has \emph{shape} $\lambda$. The column lengths of $\lambda$ are denoted by $\zeta_1,\ldots,\zeta_{\lambda_1}$, and $\zeta \coloneqq (\zeta_1,\ldots,\zeta_{\lambda_1})$ is called the \emph{conjugate partition}. A location in $\lambda$ with column index $1 \leq c \leq \lambda_1$ and row index $1 \leq r \leq \zeta_c$ may be specified by $(r,c)$. We use the notation $a \times b$ to denote the sequence $(b,\ldots,b)$ of $a$ copies of $b$. 

\begin{example}
\label{example:youngDiagram}
The partition $(4,2,2)$ is identified with the following Young diagram.
\begin{center}
\ytableausetup{mathmode,boxsize=1em,centertableaux}
\begin{ytableau}
\; & \; & \; & \; \\
\; & \; \\
\; & \; \\ 
\end{ytableau}
\end{center}
\end{example}  

Given a partition $\lambda$, a \emph{filling} $F$ of shape $\lambda$ with entries in $[n]$ is an assignment of a value in $[n]$ to each location in the Young diagram $\lambda$. We write $F(r,c)$ for the value of a filling $F$ at the location $(r,c)$. A filling such that no column contains two equal values is called a \emph{cardinal filling}. A \emph{tableau} $T$ of shape $\lambda$ with entries in $[n]$ is a cardinal filling such that the values increase strictly down each column. A \emph{semistandard tableau} $S$ of shape $\lambda$ with entries in $[n]$ is a tableau such that the values increase weakly along each row. The \emph{content} of a filling is the $n$-tuple of non-negative numbers $z=(z_1,\ldots,z_n)$ where $z_i$ is equal to the number of locations with value equal to $i$ in the filling. A filling is called a \emph{numbering} of shape $\lambda$ if the content of the filling equals $|\lambda| \times 1$. A semistandard tableau that is also a numbering is called a \emph{standard tableau}.

\begin{example}
\label{example:youngDiagram2}
From left to right: a filling, cardinal filling, tableau, semistandard tableau, and standard tableau of shape $(4,2,2)$.
\begin{center}
\ytableausetup{mathmode,boxsize=1em,centertableaux}
\begin{ytableau}
1 & 1 & 2 & 1 \\
2 & 2 \\
1 & 2 \\
\end{ytableau}
\qquad 
\begin{ytableau}
3 & 1 & 2 & 2 \\
2 & 3 \\
1 & 2 \\
\end{ytableau}
\qquad 
\begin{ytableau}
2 & 1 & 4 & 1 \\
3 & 2 \\
4 & 3 \\
\end{ytableau}
\qquad 
\begin{ytableau}
1 & 2 & 3 & 3 \\
3 & 3 \\
4 & 5 \\
\end{ytableau}
\qquad 
\begin{ytableau}
1 & 2 & 3 & 6 \\
4 & 7 \\
5 & 8 \\
\end{ytableau}
\end{center}
\end{example}

If $E$ and $F$ are two fillings of shape $\lambda$, we say that they have the same \emph{row content} if the multiset of values in row $i$ of $E$ is equal to the multiset of values in row $i$ of $F$ for all $i \in [\zeta_1]$.
\begin{notation} Let $\fillcontent{\lambda}{z}$ be the finite set of fillings of shape $\lambda$ with entries in $[n]$ and content $z$. Denote by $\fillcontentc{\lambda}{z}$ the subset of $\fillcontent{\lambda}{z}$ containing the cardinal fillings. We will write $\tabcontent{\lambda}{z}$ for the subset of $\fillcontentc{\lambda}{z}$ containing those fillings that are tableaux, and $\stdcontent{\lambda}{z}$ for the subset of semistandard tableaux. With this notation in place we have the following inclusions
\begin{center}
$\stdcontent{\lambda}{z} \subset \tabcontent{\lambda}{z} \subset \fillcontentc{\lambda}{z} \subset \fillcontent{\lambda}{z}$
\end{center}
for a fixed partition $\lambda$ and content $z$. \end{notation}

\begin{definition}
\label{definition:rwCwOrder}
We define a total order called the \emph{reading word order} on the set $\fillcontent{\lambda}{z}$. Let $E, F \in \fillcontent{\lambda}{z}$. We define the \emph{reading word} $\textrm{rw}(F)$ to be the word formed by listing the values in the filling $F$ read from left to right in each row, starting with the top row and moving downward. We write $E \preceq F$ if $\textrm{rw}(E)$ precedes $\textrm{rw}(F)$ lexicographically.
\end{definition}

For $F \in \fillcontent{\lambda}{z}$, we define the \emph{row-sorting} of $F$, denoted $\rowsorting{F}$, to be the filling that arises from $F$ by reordering the values within the rows so that they increase weakly.

For a filling $F \in \fillcontent{\lambda}{z}$, define the \emph{sorting} of $F$ to be the filling that arises from first reordering the values within the columns of $F$ so that they increase weakly downwards, and then reordering the values within the rows so that they increase weakly along each row. We denote the sorting of $F$ by $\sorting{F}$ and recall the following lemma.
\begin{lemma}
\label{lemma:sortingSST}
Let $F \in \fillcontentc{\lambda}{z}$, then $\sorting{F}$ is a semistandard tableau in $\stdcontent{\lambda}{z}$.
\end{lemma}
This lemma is a variant of the ``Non-Messing-Up'' Theorem\cite{MR2311934}. For a modern proof, with formulation that more closely matches ours, see \cite[Proposition 4.1]{MR2596377}. Although the result in \cite{MR2596377} is for numberings, its proof can be extended to cardinal fillings by changing some $<$ signs into $\leq$ signs in the proof. An earlier proof can be found in \cite[Lemma 1]{MR561764}.

\begin{example}
Let 
\begin{center}
$\ytableausetup{mathmode,boxsize=1em,centertableaux}
E=\begin{ytableau}
2 & 1 & 3 & 4 \\
3 & 2 \\
4 & 3 \\
\end{ytableau}
\qquad \textrm{and} \qquad
F=\begin{ytableau}
2 & 2 & 4 & 3 \\
3 & 1 \\
3 & 4 \\
\end{ytableau}$
\end{center}
be two fillings. Then $\textrm{rw}(E)=21343243$ and $\textrm{rw}(F)=22433134$. Thus $E \prec F$ since $21343243$ precedes $22433134$ lexicographically. The sorting of these two fillings are equal with
\begin{center}
\ytableausetup{mathmode,boxsize=1em,centertableaux}
$\sorting{E}=\sorting{F}=\begin{ytableau}
1 & 2 & 3 & 4 \\
2 & 3 \\
3 & 4 \\
\end{ytableau}$
\end{center}
\end{example}
 
\subsection{The quotient module}
\label{subsec:factSpace}
Fix a partition $\lambda$ and content $z$ with $|\lambda|=|z|$. Let $R^{\fillcontent{\lambda}{z}}$ be the free $R$-module generated by $\fillcontent{\lambda}{z}$. Let $A(\lambda,z)$ be the $R$-submodule of $R^{\fillcontent{\lambda}{z}}$ generated by
\begin{enumerate}[label=(\roman*)]
\item elements $E$, where $E$ is a non-cardinal filling in $\fillcontent{\lambda}{z}$;
\item sums $E+F$, where $E$, $F \in \fillcontent{\lambda}{z}$ are two fillings that differ in a single column by a single transposition of values;
\item differences $E - \sum_{F} F$, where $E \in \fillcontent{\lambda}{z}$ and the sum is over all fillings $F$ that result from $E$ by exchanging the $m$ top elements from the $(j+1)$th column with any $m$ elements in the $j$th column (maintaining their vertical order), for some fixed positive integers $j$ and $m$. For each such difference, the integers $j$ and $m$ are fixed, but the choice of $m$ elements in the $j$th column varies in the sum.
\end{enumerate}
The sums in (ii) are called the \emph{Grassmannian relations} and the differences in (iii) are called the \emph{Pl\"{u}cker relations}. 

\begin{example}
Let $\lambda = (2,2,1)$. Then
\begin{center}
\ytableausetup{mathmode,boxsize=1em,centertableaux}
\begin{ytableau}
    2 & 1 \\
    3 & 4 \\
    4 \\
    \end{ytableau} +
    \begin{ytableau}
    3 & 1 \\
    2 & 4 \\
    4 \\ 
    \end{ytableau}
\end{center}
is a Grassmannian relation. Fixing $m=2$ and $j=1$ we have a Pl\"{u}cker relation
\begin{displaymath}
    \ytableausetup{mathmode,boxsize=1em,centertableaux}
    \setlength{\delimitershortfall}{-5pt}
    \begin{ytableau}
        2 & 1 \\
        3 & 4 \\
        4 \\
        \end{ytableau}\,\, -
        \left(\begin{ytableau}
        1 & 2 \\
        4 & 3 \\
        4 \\
        \end{ytableau} +
        \begin{ytableau}
        1 & 2 \\
        3 & 4 \\
        4 \\
        \end{ytableau} +
        \begin{ytableau}
        2 & 3 \\
        1 & 4 \\
        4 \\
        \end{ytableau}\right).
\end{displaymath}
\end{example}

\begin{remark}
    When we quotient by the $R$-submodule $A(\lambda,z)$ the generators in (i) are redundant so long as $2$ is invertible in $R$; two equal entries in a non-cardinal filling $E$ may always be swapped, and so (ii) yields $E = \frac{1}{2}(E+E)=0$.
\end{remark}
 
\begin{theorem}[{\cite[\S 8.1]{MR1464693}}]
\label{theorem:fultonSSYTbasis}
The cosets of semistandard tableaux form a basis of the quotient module $R^{\fillcontent{\lambda}{z}} / A(\lambda,z)$ for every partition $\lambda$.
\end{theorem}

Our constructions differ from the ones found in \cite{MR1464693}. The first distinction is that our $R$-module $R^{\fillcontent{\lambda}{z}} / A(\lambda,z)$ is a submodule of the $R$-module $E^{\lambda}$ of \cite[\S 8.1]{MR1464693}. In particular, $$E^{\lambda} \cong \displaystyle \bigoplus_{|z|=|\lambda|} R^{\fillcontent{\lambda}{z}}/A(\lambda,z),$$ where $z$ ranges over all $n$-tuples of non-negative integers satisfying $|z|=|\lambda|$. 

The second distinction is that our Pl\"{u}cker relations are a subset of the ones found in \cite[\S 8.1, Lemma 1(iii)]{MR1464693}. The differences in \cite[\S 8.1, Lemma 1(iii)]{MR1464693} are of the form $E - \sum_{F} F$, where $E \in \fillcontent{\lambda}{z}$ and for two given columns and a positive integer $m$, the sum is over all fillings $F$ that result from $E$ by exchanging any $m$ elements of the leftmost given column with a fixed choice of $m$ elements from the rightmost given column (maintaining their vertical order). 

To reconcile these differences, fix a partition $\lambda$ and define 
\begin{center}
$F(\lambda) = \displaystyle \bigsqcup_{|z|=|\lambda|} F(\lambda,z)$ \qquad \text{and} \qquad $A(\lambda) = \displaystyle \bigoplus_{|z|=|\lambda|} A(\lambda,z)$,
\end{center}
where in both cases $z$ ranges over all $n$-tuples of non-negative integers satisfying $|z|=|\lambda|$, and the direct
sum defining $A(\lambda)$ is being taken inside of $R^{F(\lambda)} = \bigoplus_{|z| = |\lambda|} R^{F(\lambda, z)}$.
 Then 
\begin{center}
$R^{F(\lambda)} / A(\lambda) = \displaystyle \bigoplus_{|z|=|\lambda|} R^{F(\lambda,z)} / A(\lambda,z)$.
\end{center}

Theorem \ref{theorem:fultonSSYTbasis} is equivalent to the claim that the cosets of semistandard tableaux form a basis of $R^{F(\lambda)} / A(\lambda)$; such a basis of $R^{F(\lambda)} / A(\lambda)$ can be split apart into a basis of each addend $R^{F(\lambda,z)} / A(\lambda,z)$. Let $m = n$ to match the notation of \cite[\S 8.1, Lemma 1]{MR1464693}. Then $R^{F(\lambda)}$ is the $F$ from \cite[\S 8.1, Lemma 1]{MR1464693}. If we can show that $A(\lambda)$ is the $Q$ from \cite[\S 8.1, Lemma 1]{MR1464693}, then combining \cite[\S 8.1, Lemma 1]{MR1464693} and \cite[\S 8.1, Theorem 1]{MR1464693} proves Theorem \ref{theorem:fultonSSYTbasis}. To complete our proof we show $A(\lambda)$ is the $Q$ from \cite[\S 8.1, Lemma 1]{MR1464693}.

As remarked above, our Pl\"{u}cker relations are a subset of the ones found in \cite[\S 8.1, Lemma 1(iii)]{MR1464693}, and so $A(\lambda) \subseteq Q$. Thus $F/Q$ is a quotient of $F/A(\lambda)$. Now \cite[\S 8.1, Lemma 1]{MR1464693} identifies $F/Q$ with the $R$-module $E^{\lambda}$, and hence $E^{\lambda}$ is a quotient of $F/A(\lambda)$. We conclude, via \cite[\S 8.1, Lemma 3]{MR1464693}, that there exists a canonical map from $F/A(\lambda)$ to $R[Z]$ that maps each $T$ to $D_T$ (see Section \ref{subsec:RLinear} for the definitions of $R[Z]$ and $D_T$). 

The proof of \cite[\S 8.1, Theorem 1]{MR1464693} does not use all of the Pl\"{u}cker relations that generate $Q$. It uses only those Pl\"{u}cker relations where the two columns being exchanged are adjacent to each other and where the elements being swapped from the rightmost column are the top $m$ elements. These are precisely our Pl\"{u}cker relations, and so the proof given for \cite[\S 8.1, Theorem 1]{MR1464693} would work just as well for $F/A(\lambda)$ as for $F/Q$. Thus we conclude that the $R$-module $F/A(\lambda)$ also has a basis consisting of the semistandard tableaux. This completes the proof of Theorem \ref{theorem:fultonSSYTbasis}. Indeed, since the quotients $F/A(\lambda)$ and $F/Q$ have the same basis this implies the canonical projection from $F/A(\lambda)$ to $F/Q$ is an isomorphism. We conclude that $A(\lambda)=Q$.

%


\subsection{Straightening}
\label{subsec:straightening}
Let $F \in \fillcontent{\lambda}{z}$. When referring to the coset $[F] \in R^{\fillcontent{\lambda}{z}}/A(\lambda,z)$, we will abuse notation and simply write $F$. Expressing a filling as a $\mathbb{Z}$-linear combination of semistandard tableaux in the quotient $R^{\fillcontent{\lambda}{z}}/A(\lambda,z)$ is referred to as \emph{straightening} the filling. Giving a non-iterative formula for this straightening and the resulting coefficients is the primary goal of this paper.

All previously existing straightening algorithms that the author is aware of work in the following way. They prescribe a choice of a relation in $ A(\lambda, z)$ that allows a given filling to be rewritten as a sum of other fillings that are all smaller in some total order, and then proceed inductively (\cite{MR1035487}, \cite[\S 2.6]{MR1824028}, \cite[\S 3.1]{MR2667486}, \cite[\S 15.5]{MR1153249}). As there are a finite number of fillings of a given shape and content, these methods are guaranteed to terminate after a finite number of steps.

\subsection{Irreducible Representations}
\label{subsec:irreps}
In this subsection $R=\mathbb{C}$. The quotient module described above is important for a number of reasons. The partitions $\lambda$ with $|\lambda|=d$ index the irreducible representations of the symmetric group $S_d$. For a permutation $\sigma \in S_d$ and $F \in \fillcontent{\lambda}{(n \times 1)}$, $\sigma$ acts on $F$ by setting values equal to $i$ to $\sigma(i)$. This induces an action of $S_d$  on $\mathbb{C}^{\fillcontent{\lambda}{(n \times 1)}} / A(\lambda,(n \times 1))$, and the irreducible $S_d$-representation $[\lambda]$ is defined as $\mathbb{C}^{\fillcontent{\lambda}{(n \times 1)}} / A(\lambda,(n \times 1))$ with this action. Thus the standard tableaux of shape $\lambda$ give a basis of $[\lambda]$ (see \cite[Theorem 2.6.5]{MR1824028} or \cite[\S 7.4, Corollary]{MR1464693}).

In a similar fashion, the partitions $\lambda$ such that $l(\lambda) \leq n$ index the polynomial irreducible $\GL_n$-representations. Let $F(\lambda)$ be the finite set of fillings of shape $\lambda$ with entries in $[n]$ and denote by $\mathbb{C}^{F(\lambda)}$ the complex vector space with basis $F(\lambda)$. Define 

\begin{center}
$A(\lambda) = \displaystyle \bigoplus_{|z|=|\lambda|} A(\lambda,z)$
\end{center}

\noindent where $z$ ranges over all $n$-tuples of non-negative integers satisfying $|z|=|\lambda|$. Then there is an action of $\GL_n$ on $\mathbb{C}^{F(\lambda)} / A(\lambda)$ (see \cite[Chapter 4.1]{phdthesisCIkenmeyer}), and the vector space $\mathbb{C}^{F(\lambda)} / A(\lambda)$ along with this action defines the irreducible $\GL_n$-representation $\{\lambda\}$. The semistandard tableaux of shape $\lambda$ (and content $z$ such that $|\lambda|=|z|$) give a vector space basis of $\{ \lambda \}$ (see \cite[\S 8.1]{MR1464693}).

\section{Rearrangement Coefficients}
\label{sec:RCoeff}
We begin this section by defining the rearrangement coefficients and the $D$-basis. We then prove that for every $S \in \stdcontent{\lambda}{z}$, there exists a $R$-module homomorphism $$\reval{S}:R^{\fillcontent{\lambda}{z}} / A(\lambda, z) \rightarrow R$$ that maps $F \in F(\lambda, z)$ to the rearrangement coefficient associated to $F$ and $S$.
\subsection{Rearrangement Coefficients and the D-Basis}
\label{subsec:reArrCoeffDBasis}
\numberwithin{equation}{subsection}

Fix $n\geq2$ and $k\leq n$. Let $\lambda = (\lambda_1,\ldots,\lambda_k)$ be a partition, and $\zeta = (\zeta_1,\ldots,\zeta_{\lambda_1})$ its conjugate partition.

Define $\ycs{\lambda}{}$ to be the group of sequences of permutations $\underline{\pi}=(\pi_1,\ldots,\pi_{\lambda_1})$, where each $\pi_{i} \in S_{\zeta_i}$. We will refer to elements of $\ycs{\lambda}{}$ as \emph{multipermutations}. Given $\underline{\pi}, \underline{\sigma} \in \ycs{\lambda}{}$ with $\underline{\pi}=(\pi_1,\ldots,\pi_{\lambda_1})$ and $\underline{\sigma}=(\sigma_1,\ldots,\sigma_{\lambda_1})$, we have the composition $\underline{\pi} \circ \underline{\sigma} = (\pi_1 \circ \sigma_1,\ldots,\pi_{\lambda_1} \circ \sigma_{\lambda_1})\in \ycs{\lambda}{}$. Further, each element $\underline{\pi}=(\pi_1,\ldots,\pi_{\lambda_1}) \in \ycs{\lambda}{}$ has an inverse $\underline{\pi}^{-1}=((\pi_1)^{-1},\ldots,(\pi_{\lambda_1})^{-1})\in \ycs{\lambda}{}$.

If $F$ is a filling of shape $\lambda$ and $\underline{\pi}=(\pi_1,\ldots,\pi_{\lambda_1}) \in \ycs{\lambda}{}$, then we define $F_{\underline{\pi}}$ to be the filling that is obtained by permuting the values of $F$ in each column $i$ by $\pi_i$. Explicitly, for column index $1 \leq c \leq \lambda_1$ and row index $1 \leq r \leq \zeta_c$ we set
\begin{center}
$F_{\underline{\pi}}(r,c)=F(\perm{\pi^{-1}_{c}}{r},c)$.
\end{center}
Then for $\underline{\pi},\underline{\sigma} \in \ycs{\lambda}{}$ we have 
\[
    (F_{\underline{\pi}})_{\underline{\sigma}} = F_{\underline{\sigma}\underline{\pi}}.
\]

We define the sign of $\underline{\pi}$ to be
\begin{center}
$\sign{\underline{\pi}} \coloneqq \sign{\pi_1}\cdot \sign{\pi_2} \cdots \sign{\pi_{\lambda_1}} \in R$,
\end{center}
where for each positive integer $d$, $\textrm{sgn}:S_d \rightarrow R$ is the map that sends even permutations to $1_R$ and odd permutations to $-1_R$. It is an easy check that for $\underline{\pi}, \underline{\sigma} \in \ycs{\lambda}{}$ we have $\sign{\underline{\pi}\circ \underline{\sigma}}=\sign{\underline{\pi}}\cdot \sign{\underline{\sigma}}$ and $\sign{\underline{\pi}^{-1}}=\sign{\underline{\pi}}$. In addition we have that $F=\sign{\underline{\pi}} \cdot F_{\underline{\pi}}$ in the quotient module $R^{\fillcontent{\lambda}{z}} / A(\lambda, z)$. 

We define the \emph{rearrangement subset} of $\ycs{\lambda}{}$ associated to $F,S \in \fillcontent{\lambda}{z}$ to be the set
\begin{center}
$\ycs{\lambda}{F,S} = \{ \underline{\pi}\in \ycs{\lambda}{} \mid  F_{\underline{\pi}}\textrm{ and }S \textrm{ have the same row content} \}$.
\end{center}
This definition is not symmetric with respect to $F$ and $S$, and $\ycs{\lambda}{F,S}$ will rarely equal $\ycs{\lambda}{S,F}$.

\begin{definition} Let $F, S \in \fillcontent{\lambda}{z}$. We define the \emph{rearrangement coefficient} associated to these fillings to be
\begin{center}
$\rcf{F}{S}\coloneqq \displaystyle \sum_{\underline{\pi}\in \ycs{\lambda}{F,S}} \sign{\underline{\pi}}$
\end{center}
\end{definition}

\begin{example}
Let $\lambda = (4, 2, 2)$ and $z=(2,2,2,2)$ with $F,S \in \fillcontent{\lambda}{z}$ such that
\begin{center}
    \ytableausetup{mathmode,boxsize=1em,centertableaux}
$F=\begin{ytableau}
2 & 1 & 4 & 1 \\
3 & 2 \\
4 & 3 \\
\end{ytableau}
\qquad \textrm{and} \qquad
S=\begin{ytableau}
1 & 1 & 4 & 4 \\
2 & 2 \\
3 & 3 \\
\end{ytableau}\,\,$.
\end{center}
If we write our permutations in one-line notation, then $\underline{\pi} = (231,123,1,1)$ is in $\ycs{\lambda}{F,S}$ since
\begin{center}
\ytableausetup{mathmode,boxsize=1em,centertableaux}
$F_{\underline{\pi}}=\begin{ytableau}
4 & 1 & 4 & 1 \\
2 & 2 \\
3 & 3 \\
\end{ytableau}$
\end{center}
has the same row content as $S$. It is not difficult to check that $\underline{\pi}$ is the only element in $\ycs{\lambda}{F,S}$ and hence $\rcf{F}{S}=\sign{\underline{\pi}}=\sign{231} \cdot \sign{123} \cdot \sign{1} \cdot \sign{1} = 1$. The set $\ycs{\lambda}{S,F}$ is empty since there are no multipermutations that would rearrange the values within the columns of $S$ so that it has the same row content as $F$. Thus $\rcf{S}{F}=0$.
\end{example}

\begin{lemma}
\label{lemma:fundRCoeff}
Let $F, T, S \in \fillcontent{\lambda}{z}$. Let $\underline{\sigma}, \underline{\gamma}, \underline{\sigma}', \underline{\gamma}'$ be fixed elements of $\ycs{\lambda}{}$. Suppose that for all $\underline{\pi} \in \ycs{\lambda}{T,S}$ we have $\underline{\sigma} \circ \underline{\pi} \circ \underline{\gamma} \in \ycs{\lambda}{F,S}$. Further, suppose for all $\underline{\pi}' \in \ycs{\lambda}{F,S}$ we have $\underline{\sigma}' \circ \underline{\pi}' \circ \underline{\gamma}' \in \ycs{\lambda}{T,S}$. Then $\rcf{F}{S}=\sign{\underline{\sigma}}\cdot\sign{\underline{\gamma}}\cdot \rcf{T}{S} = \sign{\underline{\sigma}'}\cdot\sign{\underline{\gamma}'}\cdot \rcf{T}{S}$.
\end{lemma}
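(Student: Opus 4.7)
The plan is to exploit the two hypotheses to build mutually inverse bijections between $\ycs{\lambda}{T,S}$ and $\ycs{\lambda}{F,S}$, and then use the multiplicativity of $\sign{\cdot}$ to transport one rearrangement sum into the other.

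First I would isolate the two natural maps suggested by the hypotheses: define $\Phi \colon \ycs{\lambda}{T,S} \to \ycs{\lambda}{F,S}$ by $\Phi(\underline{\pi}) = \underline{\sigma} \circ \underline{\pi} \circ \underline{\gamma}$ and $\Psi \colon \ycs{\lambda}{F,S} \to \ycs{\lambda}{T,S}$ by $\Psi(\underline{\pi}') = \underline{\sigma}' \circ \underline{\pi}' \circ \underline{\gamma}'$. The hypotheses are precisely the assertions that $\Phi$ and $\Psi$ are well-defined as maps into the claimed targets. Each of these maps is injective because $\ycs{\lambda}{}$ is a group and left/right multiplication by fixed elements is injective in a group; since $\ycs{\lambda}{T,S}$ and $\ycs{\lambda}{F,S}$ are finite, two-way injectivity forces $|\ycs{\lambda}{T,S}| = |\ycs{\lambda}{F,S}|$ and hence both $\Phi$ and $\Psi$ are bijections.

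Next I would compute directly:
\begin{align*}
\rcf{F}{S} \;=\; \sum_{\underline{\tau} \in \ycs{\lambda}{F,S}} \sign{\underline{\tau}} \;=\; \sum_{\underline{\pi} \in \ycs{\lambda}{T,S}} \sign{\Phi(\underline{\pi})} \;=\; \sum_{\underline{\pi} \in \ycs{\lambda}{T,S}} \sign{\underline{\sigma}} \cdot \sign{\underline{\pi}} \cdot \sign{\underline{\gamma}},
\end{align*}
where the first equality is the definition of $\rcf{F}{S}$, the second uses the bijection $\Phi$, and the third uses the identity $\sign{\underline{\sigma} \circ \underline{\pi} \circ \underline{\gamma}} = \sign{\underline{\sigma}} \cdot \sign{\underline{\pi}} \cdot \sign{\underline{\gamma}}$ noted just before the lemma. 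Pulling the constant factor $\sign{\underline{\sigma}} \cdot \sign{\underline{\gamma}}$ out of the sum gives $\rcf{F}{S} = \sign{\underline{\sigma}} \cdot \sign{\underline{\gamma}} \cdot \rcf{T}{S}$ (I read the $\sign{\underline{\pi}}$ in the printed conclusion as a typographic slip for $\sign{\underline{\sigma}}$, since $\underline{\pi}$ is a bound summation variable in the definition of the rearrangement coefficient). The argument for $\Psi$ is symmetric and yields $\rcf{F}{S} = \sign{\underline{\sigma}'} \cdot \sign{\underline{\gamma}'} \cdot \rcf{T}{S}$, giving the second equality.

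There is no serious obstacle here: once the bijection is in hand, the identity is a one-line manipulation. The only subtlety worth double-checking in a careful write-up is that $\Phi$ and $\Psi$ need not be mutual inverses — we do not need them to be, since injectivity of each between finite sets of equal cardinality is enough to rewrite one sum as the other. I would therefore be careful to phrase the cardinality argument using injectivity alone, rather than asserting $\Psi = \Phi^{-1}$, which would require the extra fact that $\underline{\sigma}' = \underline{\sigma}^{-1}$ and $\underline{\gamma}' = \underline{\gamma}^{-1}$, information not provided by the hypotheses.
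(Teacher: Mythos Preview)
Your proof is correct and follows essentially the same route as the paper: define the two maps $\Phi(\underline{\pi}) = \underline{\sigma}\circ\underline{\pi}\circ\underline{\gamma}$ and $\Psi(\underline{\pi}') = \underline{\sigma}'\circ\underline{\pi}'\circ\underline{\gamma}'$, observe they are injective between finite sets and hence bijective, and then pull the constant sign factors out of the sum. Your remark that $\Phi$ and $\Psi$ need not be mutual inverses is a worthwhile clarification, and you are right that the $\sign{\underline{\pi}}$ in the statement should read $\sign{\underline{\sigma}}$.
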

\begin{proof}
Consider the map $\Phi:\ycs{\lambda}{T,S} \longrightarrow \ycs{\lambda}{F,S}$ given by $\Phi(\underline{\pi})=\underline{\sigma} \circ \underline{\pi} \circ \underline{\gamma}$. This map is injective, since $\underline{\sigma} \circ \underline{\pi}_1 \circ \underline{\gamma} = \underline{\sigma} \circ \underline{\pi}_2 \circ \underline{\gamma}$ implies $\underline{\pi}_1 = \underline{\pi}_2$. In the same way, the map $\Psi:\ycs{\lambda}{F,S} \longrightarrow \ycs{\lambda}{T,S}$ given by $\Psi(\underline{\pi}')=\underline{\sigma}' \circ \underline{\pi}' \circ \underline{\gamma}'$ is injective. Thus, since both these sets are finite, we have that both $\Phi$ and $\Psi$ are bijective maps. Then
\begin{center}
$\displaystyle \rcf{F}{S}=\sum_{\underline{\pi}\in \ycs{\lambda}{F,S}} \sign{\underline{\pi}}=\sum_{\underline{\pi}\in \ycs{\lambda}{T,S}} \sign{\Phi(\underline{\pi})} = \sign{\underline{\sigma}}\cdot\sign{\underline{\gamma}}\cdot \rcf{T}{S}$
\end{center}
and
\begin{center}
$\displaystyle \rcf{T}{S}=\sum_{\underline{\pi}\in \ycs{\lambda}{T,S}} \sign{\underline{\pi}}=\sum_{\underline{\pi}\in \ycs{\lambda}{F,S}} \sign{\Psi(\underline{\pi})} = \sign{\underline{\sigma}'}\cdot\sign{\underline{\gamma}'}\cdot \rcf{F}{S}\,.$
\end{center}
The first of these two identities, coupled with the multiplication of both sides by $\sign{\underline{\sigma}'}\cdot\sign{\underline{\gamma}'}$ in the latter, gives us our  desired result.
\end{proof}
\begin{corollary}
\label{corollary:fundRCoeff}
Let $T, S \in \fillcontent{\lambda}{z}$. Then for $\underline{\pi} \in \ycs{\lambda}{}$ we have $\rcf{T_{\underline{\pi}}}{S}=\sign{\underline{\pi}}\cdot \rcf{T}{S}$.
\end{corollary}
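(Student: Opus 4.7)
The plan is to derive this as a direct consequence of Lemma \ref{lemma:fundRCoeff} by setting $F = T_{\underline{\pi}}$ and exhibiting explicit left/right multiplications that send one rearrangement subset onto the other.

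First I would verify the basic compatibility that permuting columns of a filling is covariant: for any $\underline{\pi},\underline{\tau} \in \ycs{\lambda}{}$ and any filling $T$,
\[
(T_{\underline{\pi}})_{\underline{\tau}}(r,c) = T_{\underline{\pi}}(\tau_c(r),c) = T(\pi_c(\tau_c(r)),c) = T_{\underline{\pi}\circ\underline{\tau}}(r,c),
\]
so $(T_{\underline{\pi}})_{\underline{\tau}} = T_{\underline{\pi}\circ\underline{\tau}}$. This is the only calculation in the proof and is entirely routine.

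Using this identity, I would then observe that $\underline{\tau} \in \ycs{\lambda}{T_{\underline{\pi}}, S}$ if and only if $T_{\underline{\pi}\circ\underline{\tau}}$ has the same row content as $S$, which holds if and only if $\underline{\pi}\circ\underline{\tau} \in \ycs{\lambda}{T,S}$. Equivalently, left multiplication by $\underline{\pi}^{-1}$ sends $\ycs{\lambda}{T,S}$ into $\ycs{\lambda}{T_{\underline{\pi}},S}$, and left multiplication by $\underline{\pi}$ sends $\ycs{\lambda}{T_{\underline{\pi}},S}$ into $\ycs{\lambda}{T,S}$.

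This puts us exactly in the hypothesis of Lemma \ref{lemma:fundRCoeff}, with $F = T_{\underline{\pi}}$, taking $\underline{\sigma} = \underline{\pi}^{-1}$, $\underline{\sigma}' = \underline{\pi}$, and both $\underline{\gamma},\underline{\gamma}'$ equal to the identity multipermutation. The lemma then yields
\[
\rcf{T_{\underline{\pi}}}{S} = \sign{\underline{\pi}^{-1}} \cdot \sign{\mathrm{id}} \cdot \rcf{T}{S} = \sign{\underline{\pi}} \cdot \rcf{T}{S},
\]
where the last step uses $\sign{\underline{\pi}^{-1}} = \sign{\underline{\pi}}$ recorded in the preliminaries of this subsection. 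There is no substantive obstacle; the only thing requiring care is bookkeeping the order of composition so that we apply Lemma \ref{lemma:fundRCoeff} with the correct choices of $\underline{\sigma}$ and $\underline{\sigma}'$.
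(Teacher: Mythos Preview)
Your proof is correct and is exactly the intended derivation: the paper states this as an immediate corollary of Lemma~\ref{lemma:fundRCoeff} without writing out a separate argument, and you have simply made explicit the choices $\underline{\sigma}=\underline{\pi}^{-1}$, $\underline{\sigma}'=\underline{\pi}$, $\underline{\gamma}=\underline{\gamma}'=\mathrm{id}$ that the lemma requires, together with the routine identity $(T_{\underline{\pi}})_{\underline{\tau}}=T_{\underline{\pi}\circ\underline{\tau}}$.
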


We now prove a technical lemma and a proposition that illustrate the relationship between the reading word order and rearrangement coefficients.

\begin{lemma}
\label{lemma:reorderIncreases}
Let $T \in \tabcontent{\lambda}{z}$ and $\underline{\pi} \in \ycs{\lambda}{}$ such that $\underline{\pi} \neq (\operatorname{id},\ldots,\operatorname{id})$. If $S = \rowsorting{T_{\underline{\pi}}}$, then $S \succ \sorting{T}$.
\end{lemma}
\begin{proof}
Let $r$ be the index of the first row where $T_{\underline{\pi}}$ and $T$ differ; such a row must exist since $\underline{\pi} \neq (\operatorname{id},\ldots,\operatorname{id})$. Let $c$ be the index of the first column where $T_{\underline{\pi}}$ and $T$ differ in row $r$. Then $T_{\underline{\pi}}(r,c)>T(r,c)$ because the columns of $T$ are strictly increasing downward. For any $q \leq \lambda_r$ we must also have $T_{\underline{\pi}}(r,q)\geq T(r,q)$, again because the columns of $T$ are strictly increasing downward, and row $r$ is the first row where $T_{\underline{\pi}}$ and $T$ differ. 

As $T_{\underline{\pi}}$ and $T$ are the same for rows less than $r$, so are $S$ and $\sorting{T}=\rowsorting{T}$. In row $r$, the values in each location in $T_{\underline{\pi}}$ are all greater than or equal the value in the corresponding location in $T$, with at least one being strictly greater. Hence after reordering within the rows of $T_{\underline{\pi}}$ and $T$ such that they are non-decreasing to get $S$ and $\sorting{T}$ respectively, we must have, by the ``Non-Messing-Up'' Theorem\cite{MR2311934}, that the entries in row $r$ of $S$ are lexicographically greater than the entries in row $r$ of $\sorting{T}$. Thus $S \succ \sorting{T}$.
\end{proof}

\begin{proposition}
\label{proposition:rCoeffOrder}
Let $F \in \fillcontentc{\lambda}{z}$ be a cardinal filling. Let $T \in \tabcontent{\lambda}{z}$ be the tableau that results from ordering the values within the columns of $F$ so that they increase strictly downwards. Then $T = F_{\underline{\sigma}}$ for some $\underline{\sigma} \in \ycs{\lambda}{}$. Let $S \in \stdcontent{\lambda}{z}$. Then
\begin{enumerate}[label=(\roman*)]
\item $S \prec \sorting{F}$ implies $\rcf{F}{S}=0$.
\item $F \in \tabcontent{\lambda}{z}$ implies $\rcf{F}{\sorting{F}}=1$.
\item $\rcf{F}{\sorting{F}}=\sign{\underline{\sigma}}$.
\end{enumerate}
\end{proposition}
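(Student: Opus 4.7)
The plan is to prove (ii) as the core technical lemma, deduce (iii) from (ii) using Corollary \ref{corollary:fundRCoeff}, and prove (i) by a row-by-row comparison mirroring the argument for (ii). Existence of $\underline{\sigma}$ is immediate: since $F$ is cardinal, each column has distinct entries, so there is a unique multipermutation that sorts every column into strictly increasing order, and $T$ is its image $F_{\underline{\sigma}}$.

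For (ii), I argue that when $F$ is already a tableau, the only multipermutation contributing to $\rcf{F}{\standardization{F}}$ is the identity. Since the columns of $F$ are strictly increasing, the column-sorting step of standardization is trivial and $\standardization{F}$ has the same row content as $F$; in particular $\ycs{\lambda}{F,\standardization{F}} = \ycs{\lambda}{F,F}$. For any $\underline{\pi}$ in this set and any column $c$, the value $F_{\underline{\pi}}(1,c) = F(\perm{\pi_c}{1},c)$ is at least $F(1,c)$, the column minimum, with equality if and only if $\perm{\pi_c}{1}=1$. Summing over $c$, preservation of the row~1 multiset forces equality throughout, so $\perm{\pi_c}{1}=1$ for every $c$. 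Inducting on the number of rows of $\lambda$, applied to $F$ with its top row deleted and each $\pi_c$ restricted to $\{2,\ldots,\zeta_c\}$, yields that $\underline{\pi}$ is the identity, and hence $\rcf{F}{\standardization{F}} = 1$.

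For (iii), the tableau $T = F_{\underline{\sigma}}$ shares column multisets with $F$, so $\standardization{T} = \standardization{F}$. Applying (ii) to $T$ gives $\rcf{T}{\standardization{F}} = 1$, while Corollary \ref{corollary:fundRCoeff} yields $\rcf{T}{\standardization{F}} = \sign{\underline{\sigma}} \cdot \rcf{F}{\standardization{F}}$, so $\rcf{F}{\standardization{F}} = \sign{\underline{\sigma}}$.

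For (i), I prove the stronger claim that if $G \in \fillcontentc{\lambda}{z}$ has the same row content as a semistandard $S \in \stdcontent{\lambda}{z}$, then $\standardization{G} \preceq S$. Applied with $G = F_{\underline{\pi}}$ for any $\underline{\pi} \in \ycs{\lambda}{F,S}$, and noting $\standardization{F_{\underline{\pi}}} = \standardization{F}$ because $F_{\underline{\pi}}$ and $F$ have identical column multisets, this is the contrapositive of (i). Row~1 of $\standardization{G}$ is the weakly increasing sort of the column minima $\{\min_r G(r,c)\}_c$, while row~1 of $S$ is the weakly increasing sort of $\{G(1,c)\}_c$ using equal row content and semistandardness of $S$. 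Since $\min_r G(r,c) \leq G(1,c)$ for every $c$, the first sequence is coordinate-wise $\leq$ the second, so row~1 of $\standardization{G}$ is lexicographically at most row~1 of $S$. If strictly smaller we are done; otherwise equality of sums forces every column minimum of $G$ to be attained in row~1, so sorting columns of $G$ fixes the top row, and deleting row~1 commutes with standardization for both $G$ and $S$. Induction on the number of rows then completes the argument; the main obstacle is establishing this commutation carefully in the equality case, after which the descent is routine.
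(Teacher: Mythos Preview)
Your proof is correct and rests on the same underlying idea as the paper's --- that in a column-sorted filling the top row carries the column minima, so any nontrivial column permutation strictly increases some top-row entry --- but you organize the argument differently. The paper proves (i) first by reducing to the tableau $T$ and locating the first row where $T_{\underline{\pi}}$ and $T$ differ, then derives (ii) by observing that this same argument rules out every non-identity $\underline{\pi}$, and finally (iii) from (ii) via Corollary~\ref{corollary:fundRCoeff}. You invert the order: a direct column-minimum plus row-deletion induction gives (ii), then (iii) follows as in the paper, and for (i) you isolate the cleaner standalone lemma that any cardinal $G$ sharing row content with a semistandard $S$ satisfies $\standardization{G} \preceq S$, proved by the same induction on rows. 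Your route makes (i) into a reusable statement and avoids the first-difference bookkeeping, at the cost of having to verify that row deletion commutes with standardization in the equality case; the paper's first-difference argument sidesteps that commutation check but is more tied to the specific contradiction setup.
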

\begin{proof}
\noindent $\textit{(i)}$ By Corollary \ref{corollary:fundRCoeff} we have $\rcf{T}{S} = \sign{\underline{\sigma}} \cdot \rcf{F}{S}$. We will show that $S \prec \sorting{F} = \sorting{T}$ implies $\rcf{T}{S}=0$, and hence $\rcf{F}{S}=0$. Let $S \prec \sorting{T}$. Now suppose that for some $\underline{\pi}\in \ycs{\lambda}{}$ we have that $T_{\underline{\pi}}$ has the same row content as $S$. Then $S = \rowsorting{T_{\underline{\pi}}}$. 

If $\underline{\pi}=(\operatorname{id},\ldots,\operatorname{id})$, then $T=T_{\underline{\pi}}$. Hence $S=\rowsorting{T}$, and since the columns of $T$ are strictly increasing downward this means $S  = \sorting{T}$. This contradicts our initial assumption. If $\underline{\pi}\neq(\operatorname{id},\ldots,\operatorname{id})$, then since $S = \rowsorting{T_{\underline{\pi}}}$ we have by Lemma~\ref{lemma:reorderIncreases} that $S \succ \sorting{T}$. Hence our initial assumption is contradicted in this case as well. Thus there are no $\underline{\pi}\in \ycs{\lambda}{}$ such that $T_{\underline{\pi}}$ has the same row content as $S$, and so $\ycs{\lambda}{T, S} = \emptyset$ and $\rcf{T}{S}=0$.

\noindent $\textit{(ii)}$ If $F \in \tabcontent{\lambda}{z}$ then $F=T$. If $\underline{\pi}=(\operatorname{id},\ldots,\operatorname{id})\in \ycs{\lambda}{}$, then $T_{\underline{\pi}}$ has the same row content as $\sorting{T}$ (since $T_{\underline{\pi}}=T$ and $\sorting{T}=\rowsorting{T}$). For any other $\underline{\pi}'\in \ycs{\lambda}{}$, suppose that $T_{\underline{\pi}'}$ has the same row content as $\sorting{T}$. This implies that $\rowsorting{T_{\underline{\pi}'}} = \sorting{T}$. Applying Lemma~\ref{lemma:reorderIncreases} we conclude that $\sorting{T} \succ \sorting{T}$. As this is clearly not possible, we must have that $\underline{\pi}$ is the only element in $\ycs{\lambda}{T, \sorting{T}}$, and hence  $\rcf{F}{\sorting{F}}=\rcf{T}{\sorting{T}}=\sign{\underline{\pi}}=1$.

\noindent $\textit{(iii)}$ This follows from part $\textit{(ii)}$, Corollary \ref{corollary:fundRCoeff}, and $\sorting{F}=\sorting{T}$.
\end{proof}

We label all the elements in $\stdcontent{\lambda}{z}$ such that
\begin{equation}
\label{equation:rowOrderStd}
S_{1} \succ S_{2} \succ \cdots \succ S_{\kostka{\lambda}{z}}.
\end{equation}
This is well defined since $\succ$ is a total order.
\begin{definition}
For each $S_{i} \in \stdcontent{\lambda}{z}$ we define
\begin{center}
$\rbasis{S_{i}} := S_{i} - \displaystyle \sum_{\substack{S_j \in \stdcontent{\lambda}{z} \\ \textrm{ such that } j < i}} \rcf{S_{i}}{S_j} \cdot \rbasis{S_j}.$
\end{center}
It is not difficult to see that the set $\{ \rbasis{S_{i}} \mid 1 \leq i \leq  \kostka{\lambda}{z} \}$ is a basis for $R^{\fillcontent{\lambda}{z}} / A(\lambda, z)$; this follows by noting that the transition matrix from the basis $\stdcontent{\lambda}{z}$ of $R^{\fillcontent{\lambda}{z}} / A(\lambda, z)$ is triangular and has $1$'s on the diagonal. We will refer to $\{ \rbasis{S_{i}} \mid 1 \leq i \leq  \kostka{\lambda}{z} \}$ as the \emph{D-basis} for $R^{\fillcontent{\lambda}{z}} / A(\lambda, z)$.
\end{definition}

\subsection{A R-linear map}
\label{subsec:RLinear}

Let $S\in \stdcontent{\lambda}{z}$. We claim that there exists a $R$-module homomorphism $\reval{S}:R^{\fillcontent{\lambda}{z}} / A(\lambda, z) \rightarrow R$ that maps $F \in F(\lambda, z)$ to $\rcf{F}{S}$. The homomorphism $\reval{S}$ will be defined as the composition of two $R$-module homomorphisms that we now introduce.

Let $m=n$. Let $Z_{i,j}$ for $i \in [n], j \in [m]$ be $nm$ indeterminates. Define $R[Z]$ to be the polynomial ring over $R$ in these $nm$ indeterminates. For a $p$-tuple $(i_1,\ldots,i_p) \in [m]^p$ with $p \leq m$ we define 
\[
D_{i_1,\ldots,i_p} = \det \begin{bmatrix} 
Z_{1,i_1} & \dots & Z_{1,i_p} \\
\vdots & \, & \vdots \\
Z_{p,i_1} &  \dots     & Z_{p,i_p} 
\end{bmatrix} \in R[Z].
\]
For $F \in \fillcontent{\lambda}{z}$ set
\[
D_F = \displaystyle \prod_{j=1}^{\lambda_1} D_{F(1, j), F(2, j), \ldots, F(\zeta_j, j)}\,.
\]
By \cite[\S 8.1, Lemma 3]{MR1464693} there is a $R$-module homomorphism $\varphi$ from $E^{\lambda}$ to $R[Z]$ that maps $F\!\in\!F(\lambda, z)$ to $D_F$. This restricts to a $R$-module homomorphism from $R^{\fillcontent{\lambda}{z}} / A(\lambda,z)$ to $R[Z]$.

Let  $\mathfrak{C}_{-,S}$ be the map from $R[Z]$ to $R$ that sends $p \in R[Z]$ to the coefficient of the monomial $$\displaystyle \prod_{j=1}^{\lambda_1} \prod_{i=1}^{\zeta_j} Z_{i, S(i,j)}$$ in $p$. This map is trivially a $R$-module homomorphism.

Finally, set 
\[
    \reval{S}:=\mathfrak{C}_{-,S} \circ \varphi.
\]

\begin{proposition}
\label{proposition:theRModuleHom}
Let $S\in \stdcontent{\lambda}{z}$. The map $\reval{S}$ is a $R$-module homomorphism from $R^{\fillcontent{\lambda}{z}} / A(\lambda, z)$ to $R$ that maps $F \in F(\lambda, z)$ to $\rcf{F}{S}$.
\end{proposition}
\begin{proof}
That $\reval{S}$ is a $R$-module homomorphism follows immediately from the fact it is defined as the composition of two $R$-module homomorphisms. It remains to show that $\reval{S}(F) = \rcf{F}{S}$, or equivalently that $\mathfrak{C}_{-,S}(D_F) = \rcf{F}{S}$. We have
\begin{align*}
       D_F &= \displaystyle \prod_{j=1}^{\lambda_1} D_{F(1, j), F(2, j), \ldots, F(\zeta_j, j)} \\
       \,  &= \displaystyle \prod_{j=1}^{\lambda_1} \sum_{\sigma \in S_{\zeta_j}} \sign{\sigma} Z_{1,F(\sigma(1), j)} Z_{2, F(\sigma(2), j)} \cdots Z_{\zeta_j, F(\sigma(\zeta_j), j)}  \\
       \,  &= \displaystyle \prod_{j=1}^{\lambda_1} \sum_{\sigma \in S_{\zeta_j}} \sign{\sigma} Z_{1,F(\sigma^{-1}(1), j)} Z_{2, F(\sigma^{-1}(2), j)} \cdots Z_{\zeta_j, F(\sigma^{-1}(\zeta_j), j)} \, , \\
       \intertext{where the second equality is expanding the determinant, and the third equality is a re-indexing of the summands under the bijection from $S_{\zeta_j}$ to itself sending each element to its inverse and subsequently applying $\sign{\sigma} = \sign{\sigma^{-1}}$. Continuing the above equation }
       \,  &= \displaystyle \sum_{\underline{\pi}=(\pi_1,\ldots,\pi_{\lambda_1}) \in \ycs{\lambda}{}} \sign{\underline{\pi}} \prod_{j=1}^{\lambda_1} Z_{1,F(\pi_j^{-1}(1), j)} Z_{2, F(\pi_j^{-1}(2), j)} \cdots Z_{\zeta_j, F(\pi_j^{-1}(\zeta_j), j)} \\
       \,  &= \displaystyle \sum_{\underline{\pi} \in \ycs{\lambda}{}} \sign{\underline{\pi}} \prod_{j=1}^{\lambda_1} \prod_{i=1}^{\zeta_j} Z_{i,F_{\underline{\pi}}(i, j)} \\ \,  &= \displaystyle \sum_{\underline{\pi} \in \ycs{\lambda}{}} \sign{\underline{\pi}} \prod_{i=1}^{\zeta_1} \prod_{j=1}^{\lambda_i} Z_{i,F_{\underline{\pi}}(i, j)}\, .\\
\end{align*}
A summand from the above equation will contain the monomial $$\displaystyle \prod_{j=1}^{\lambda_1} \prod_{i=1}^{\zeta_j} Z_{i, S(i,j)} = \displaystyle \prod_{i=1}^{\zeta_1} \prod_{j=1}^{\lambda_i} Z_{i, S(i,j)}$$ precisely when for every fixed $i \in [\zeta_1]$ the multiset $\{ S(i,j) | j \in [\lambda_i] \}$ equals the multiset $\{ F_{\underline{\pi}}(i,j) | j \in [\lambda_i] \}$. This occurs if and only if $F_{\underline{\pi}}$ has the same row content as $S$; that is, $\underline{\pi} \in \ycs{\lambda}{F,S}$. We conclude that $$\mathfrak{C}_{-,S}(D_F) = \displaystyle \sum_{\underline{\pi} \in \ycs{\lambda}{F,S}} \sign{\underline{\pi}} = \rcf{F}{S}\,.$$ \end{proof}  

\section{Main Results}
\label{sec:Main}
\subsection{Straightening Formula}
\label{subsec:mainResults}
We are now ready to prove our primary results. This first lemma proves Theorem \ref{theorem:mainTheoremStr1In} in the case when $F$ is a semistandard tableau.

\begin{lemma}
\label{lemma:mainTheorem1Standard}
Let $S_i \in \stdcontent{\lambda}{z}$. Then 
\begin{center}
$\displaystyle S_i = \displaystyle \sum_{S_j \in \stdcontent{\lambda}{z}} \rcf{S_i}{S_j} \cdot \rbasis{S_j}$.
\end{center}
\end{lemma}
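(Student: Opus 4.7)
The plan is to combine the recursive definition of the $D$-basis with the vanishing and normalization properties of the rearrangement coefficients established in Proposition \ref{proposition:rCoeffOrder}. Rearranging the defining relation
\[
\rbasis{S_i} = S_i - \sum_{\substack{S_j \in \stdcontent{\lambda}{z} \\ j<i}} \rcf{S_i}{S_j}\cdot \rbasis{S_j}
\]
gives
\[
S_i = \rbasis{S_i} + \sum_{\substack{S_j \in \stdcontent{\lambda}{z} \\ j<i}} \rcf{S_i}{S_j}\cdot \rbasis{S_j},
\]
so it will be enough to show that, in the right-hand side of the claimed identity, the coefficient of $\rbasis{S_i}$ equals $1$ and every coefficient with $j>i$ vanishes.

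For the first point, I would observe that $S_i\in \stdcontent{\lambda}{z}$ is already a semistandard tableau, so its standardization satisfies $\standardization{S_i}=S_i$. Part \textit{(ii)} of Proposition \ref{proposition:rCoeffOrder} then yields $\rcf{S_i}{S_i}=\rcf{S_i}{\standardization{S_i}}=1$. For the second point, recall the ordering convention \eqref{equation:rowOrderStd}: $j>i$ means $S_j\prec S_i=\standardization{S_i}$. Part \textit{(i)} of Proposition \ref{proposition:rCoeffOrder} (applied with $F=S_i$, which is a cardinal filling) gives $\rcf{S_i}{S_j}=0$ for every such $j$.

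Combining these two observations, the sum on the right-hand side of the lemma reduces as
\[
\sum_{S_j\in\stdcontent{\lambda}{z}} \rcf{S_i}{S_j}\cdot \rbasis{S_j} \;=\; \rcf{S_i}{S_i}\cdot \rbasis{S_i} + \sum_{j<i} \rcf{S_i}{S_j}\cdot \rbasis{S_j} \;=\; \rbasis{S_i} + \sum_{j<i} \rcf{S_i}{S_j}\cdot \rbasis{S_j},
\]
which by the rearranged definition of $\rbasis{S_i}$ is exactly $S_i$. There is no real obstacle here: the lemma is essentially a bookkeeping consequence of the definition of the $D$-basis together with Proposition \ref{proposition:rCoeffOrder}, and the genuine work has already been done there. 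The main thing to be careful about is the direction of the row-word order convention, to make sure $j>i$ is correctly identified with $S_j\prec S_i$ before invoking Proposition \ref{proposition:rCoeffOrder}\textit{(i)}.
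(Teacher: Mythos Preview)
Your proof is correct and follows essentially the same approach as the paper: both arguments use Proposition~\ref{proposition:rCoeffOrder}(i) to discard the terms with $j>i$, Proposition~\ref{proposition:rCoeffOrder}(ii) to identify the $j=i$ coefficient as $1$, and then the defining relation of $\rbasis{S_i}$ to conclude. The only difference is cosmetic---you start from the rearranged definition of $\rbasis{S_i}$ and show the right-hand side matches, while the paper starts from the right-hand side and simplifies down to $S_i$.
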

\begin{proof}
For each $j > i$ we have $S_j \prec S_i = \sorting{S_i}$ and thus $\rcf{S_i}{S_j}=0$ by Proposition~\ref{proposition:rCoeffOrder}(i). Thus 
\begin{center}
$\begin{array}{rlr}
\displaystyle \sum_{S_j \in \stdcontent{\lambda}{z}} \rcf{S_i}{S_j} \cdot \rbasis{S_j} & = \displaystyle \sum_{\substack{S_j \in \stdcontent{\lambda}{z} \\ \textrm{such that } j \leq i}} \rcf{S_{i}}{S_j} \cdot \rbasis{S_j}  \\
\; & \; \; \\
\; & = \rbasis{S_i} + \displaystyle \sum_{\substack{S_j \in \stdcontent{\lambda}{z} \\ \textrm{such that } j < i}} \rcf{S_{i}}{S_j} \cdot \rbasis{S_j} \qquad \qquad \textrm{by Prop }\ref{proposition:rCoeffOrder}(ii) \\
\; & \; \; \\
\; & = \left(S_{i} - \displaystyle \sum_{\substack{S_j \in \stdcontent{\lambda}{z} \\ \textrm{such that } j < i}} \rcf{S_{i}}{S_j} \cdot \rbasis{S_j}\right) + \displaystyle \sum_{\substack{S_j \in \stdcontent{\lambda}{z} \\ \textrm{such that } j < i}} \rcf{S_{i}}{S_j} \cdot \rbasis{S_j} \\
\; & \; \; \\
\; & = S_i \\
\end{array}$
\end{center}
\end{proof}

\begin{proposition}
\label{proposition:mainStraighten}
Suppose that $F \in \fillcontent{\lambda}{z}$, with $F=\sum_ {S_i \in \stdcontent{\lambda}{z}}  a_i S_i$ in $R^{\fillcontent{\lambda}{z}} / A(\lambda, z)$ (where $a_i \in R$). Then
\begin{center}
$\rcf{F}{S_j} = \displaystyle \sum_{S_i \in \stdcontent{\lambda}{z}} a_i \cdot \rcf{S_i}{S_j}$
\end{center}
for each $S_j \in \stdcontent{\lambda}{z}$.
\end{proposition}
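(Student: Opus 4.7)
The plan is to regard $\rho_j : \mathbb{C}^{\fillcontent{\lambda}{z}} \to \mathbb{C}$, defined on basis elements by $\rho_j(F) := \rcf{F}{S_j}$ and extended linearly, as a linear functional, and to show that it annihilates the subspace $A(\lambda, z)$. Once this is done, the relation $F \equiv \sum_i a_i S_i \pmod{A(\lambda, z)}$ means $F - \sum_i a_i S_i \in A(\lambda, z)$, and applying $\rho_j$ yields exactly the desired $\rcf{F}{S_j} = \sum_i a_i \rcf{S_i}{S_j}$. Equivalently, one may phrase this as an induction on the minimum number of generator-relations needed to rewrite $F$ as $\sum_i a_i S_i$, with the inductive step reducing to the vanishing of $\rho_j$ on a single generator.

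By Remark \ref{remark:simplePlGen} we have $A(\lambda, z) = (\Grr{n} \cap \mathbb{C}^{\fillcontent{\lambda}{z}}) + (\Plrs{n} \cap \mathbb{C}^{\fillcontent{\lambda}{z}})$, so it suffices to check vanishing on the two kinds of generators. For a Grassmannian generator $E + F$, Lemma \ref{lemma:columnOrderRCoeff} immediately gives $\rho_j(E + F) = \rcf{E}{S_j} + \rcf{F}{S_j} = 0$. For a simple Plücker generator $F - \sum_{l=1}^{p} F_l$ in which $F$ is cardinal, Theorem \ref{theorem:rCoeffPluckerGeneral} gives $\rho_j(F - \sum_l F_l) = 0$ directly.

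The remaining case is a Plücker generator with $F$ non-cardinal; then $\rcf{F}{S_j} = 0$ by Corollary \ref{corollary:duplicateRCoeff}. If the duplicated column of $F$ lies outside columns $j$ and $j+1$, the duplicate persists in every $F_l$, so each $\rcf{F_l}{S_j}$ also vanishes by the same corollary. If the duplicate instead involves columns $j$ or $j+1$, I would argue case-by-case: each $F_l$ either still contains a duplicate within a column (hence has vanishing rearrangement coefficient) or is cardinal but, by a pigeonhole step analogous to the one in the proof of Proposition \ref{proposition:rCoeffPlucker2Col}, fails the hypotheses of Lemma \ref{lemma:constraintsSPairsSLeft} and still contributes zero. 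I expect this non-cardinal Plücker subcase to be the only real obstacle; the remainder follows mechanically from the column-by-column bookkeeping developed in Section \ref{subsec:reArrCoeffProp}, and once $\rho_j$ is shown to kill every generator the proposition follows at once.
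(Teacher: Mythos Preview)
Your approach—viewing $G \mapsto \rcf{G}{S_j}$ as a linear functional on $\mathbb{C}^{\fillcontent{\lambda}{z}}$ and checking that it kills the generators of $A(\lambda,z)$ via Lemma~\ref{lemma:columnOrderRCoeff} and Theorem~\ref{theorem:rCoeffPluckerGeneral}—is exactly the paper's; the paper simply phrases it as an induction on the number of relations used to straighten $F$ and cites precisely those two results.

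Your third paragraph, treating Pl\"ucker generators $F - \sum_l F_l$ with non-cardinal $F$, is an unnecessary detour and your sketch for it is incomplete. In the inductive (straightening) framing you yourself mention, a Pl\"ucker relation is only ever applied to a \emph{cardinal} filling: a non-cardinal filling already lies in $\Grr{n}$ and hence is zero in the quotient, so no further relation is needed for it. Thus every Pl\"ucker step in the induction has cardinal $F$, Theorem~\ref{theorem:rCoeffPluckerGeneral} applies directly, and any non-cardinal $F_l$ produced on the right contributes zero both in the factor space and (by Corollary~\ref{corollary:duplicateRCoeff}) to $\rho_j$. Equivalently, the Grassmannian generators together with the Pl\"ucker generators having cardinal $F$ already span $A(\lambda,z)$, so vanishing on those suffices. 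You can simply drop the non-cardinal case.
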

\begin{proof}
By Proposition~\ref{proposition:theRModuleHom}, applying the $R$-module homomorphism $\reval{S_j}$ to both sides of the equation $F=\sum_ {S_i \in \stdcontent{\lambda}{z}}  a_i S_i$ yields the desired equality for each $S_j \in \stdcontent{\lambda}{z}$.
\end{proof}

We can now prove Theorem 1.1, which we restate here for convenience.

\newtheorem*{theorem:mainTheoremStr1In}{Theorem \ref*{theorem:mainTheoremStr1In}}
\begin{theorem:mainTheoremStr1In}
\textit{Let $F \in \fillcontent{\lambda}{z}$, with $F=\sum_ {S_i \in \stdcontent{\lambda}{z}}  a_i S_i$ in $R^{\fillcontent{\lambda}{z}} / A(\lambda, z)$. Then
\begin{center}
$\displaystyle \sum_{S_i \in \stdcontent{\lambda}{z}} a_i S_i = \displaystyle \sum_{S_j \in \stdcontent{\lambda}{z}} \rcf{F}{S_j} \cdot \rbasis{S_j}$.
\end{center}}
\end{theorem:mainTheoremStr1In}
\begin{proof}
By Proposition \ref{proposition:mainStraighten}
\begin{center}
$\rcf{F}{S_j} = \displaystyle \sum_{S_i \in \stdcontent{\lambda}{z}} a_i \cdot \rcf{S_i}{S_j}$\,
\end{center}
for each $S_j \in \stdcontent{\lambda}{z}$. This further implies that 
\begin{center}
$\left( \rcf{F}{S_j} - \displaystyle \sum_{S_i \in \stdcontent{\lambda}{z}} a_i \cdot \rcf{S_i}{S_j} \right) \cdot \rbasis{S_j} = 0$
\end{center}
for each $S_j \in \stdcontent{\lambda}{z}$. Summing over all such terms we have
\[
\displaystyle \sum_{S_j \in \stdcontent{\lambda}{z}} \left( \rcf{F}{S_j} - \displaystyle \sum_{S_i \in \stdcontent{\lambda}{z}} a_i \cdot \rcf{S_i}{S_j} \right) \cdot \rbasis{S_j} = 0\,.
\]
Rearranging the left hand side summands from the above equation results in
\[
\displaystyle \sum_{S_j \in \stdcontent{\lambda}{z}} \rcf{F}{S_j} \cdot \rbasis{S_j} - \displaystyle \sum_{S_j \in \stdcontent{\lambda}{z}} \sum_{S_i \in \stdcontent{\lambda}{z}} a_i \cdot \rcf{S_i}{S_j} \cdot \rbasis{S_j} = 0
\]
and 
\[
\displaystyle \sum_{S_j \in \stdcontent{\lambda}{z}} \rcf{F}{S_j} \cdot \rbasis{S_j} - \displaystyle \sum_{S_i \in \stdcontent{\lambda}{z}} a_i \cdot \sum_{S_j \in \stdcontent{\lambda}{z}} \rcf{S_i}{S_j} \cdot \rbasis{S_j} = 0. 
\]
Finally, applying Lemma~\ref{lemma:mainTheorem1Standard} to this equation yields
\[
\displaystyle \sum_{S_j \in \stdcontent{\lambda}{z}} \rcf{F}{S_j} \cdot \rbasis{S_j} - \displaystyle \sum_{S_i \in \stdcontent{\lambda}{z}} a_i \cdot S_i = 0 
\]
allowing us to conclude
\[
\displaystyle \sum_{S_i \in \stdcontent{\lambda}{z}} a_i S_i = \displaystyle \sum_{S_j \in \stdcontent{\lambda}{z}} \rcf{F}{S_j} \cdot \rbasis{S_j} \,.
\]
\end{proof}

\begin{corollary}
\label{corollary:mainCorollaryStr}
Suppose that $F \in \fillcontentc{\lambda}{z}$, with $\sorting{F}=S_k \in \stdcontent{\lambda}{z}$. If $F=\sum_ {S_i \in \stdcontent{\lambda}{z}}  a_i S_i$ in $R^{\fillcontent{\lambda}{z}} / A(\lambda, z)$, then
\begin{center}
$\displaystyle \sum_{S_i \in \stdcontent{\lambda}{z}} a_i S_i = \displaystyle \sum_{\substack{S_j \in \stdcontent{\lambda}{z} \\ \textrm{such that } j \leq k}} \rcf{F}{S_j} \cdot \rbasis{S_j}$
\end{center}
\end{corollary}
\begin{proof}
This follows by Lemma \ref{lemma:sortingSST} and Proposition \ref{proposition:rCoeffOrder}(i).
\end{proof}

Our second main result, Corollary~\ref*{corollary:mainCorollaryStr2}, follows as an immediate consequence.

\newtheorem*{corollary:mainCorollaryStr2}{Corollary \ref*{corollary:mainCorollaryStr2}}
\begin{corollary:mainCorollaryStr2}
\textit{Suppose that $F \in \fillcontent{\lambda}{z}$ is a cardinal filling, with $\sorting{F}=S_k \in \stdcontent{\lambda}{z}$. If $F=\sum_ {S_i \in \stdcontent{\lambda}{z}}  a_i S_i$ in $R^{\fillcontent{\lambda}{z}} / A(\lambda, z)$, then $a_k = \sign{\underline{\sigma}}$ and $a_l = 0$ for all $l > k$, where $\underline{\sigma}$ is the unique multipermutation in $C(\lambda)$ such that $F_{\underline{\sigma}}$ is a tableau. If $F$ is a tableau, $a_k = 1$.}
\end{corollary:mainCorollaryStr2}
\begin{proof}
Corollary \ref{corollary:mainCorollaryStr} and the definition of the $D$-basis imply that $a_l = 0$ for all $l > k$ and that $a_k = \rcf{F}{S_k}$. Then $\rcf{F}{S_k} = \rcf{F}{\sorting{F}}=\sign{\underline{\sigma}}$ by Proposition \ref{proposition:rCoeffOrder}(iii). Finally, if $F$ is a tableau, then $a_k = 1$ by Proposition \ref{proposition:rCoeffOrder}(ii).
\end{proof}

Corollary \ref{corollary:mainCorollaryStr} is a generalization of Theorem 5.2(1) from~\cite{MR1417711}, where Lakshmibai and Gonciulea prove this for two column shapes of the form $\lambda = p \times 2$, for some positive integer $p$. Their notation differs somewhat and their result is stated in terms of lattices, meets, and joins but the sorting of a tableau with a two column shape corresponds to the leading term whose coefficient they prove is equal to 1.

\begin{remark}
In general, calculating the left hand side of the equation in Theorem~\ref{theorem:mainTheoremStr1In} using a classical straightening algorithm is a slow, iterative process.  Computing the right hand side in Theorem~\ref{theorem:mainTheoremStr1In} can be achieved via two steps. In the first, the $D$-basis is computed. In the second, the filling $F$ may be straightened by calculating $\rcf{F}{S_j}$ for $S_j \in \stdcontent{\lambda}{z}$. In both steps, every rearrangement coefficient may be computed in parallel, making implementation on modern computer architecture considerably more efficient.

Additionally, the D-basis only depends on the content $z$ and shape $\lambda$, and hence, once it has been computed, straightening \emph{any filling} with the same content and shape reduces to the computation of (at most) $\kostka{\lambda}{z}$ rearrangement coefficients.
\end{remark}
 
\begin{example}
Let the partition $\lambda = (4,3,2)$ and the content $z=(2,2,3,2)$. The six semistandard tableaux with this shape and content, ordered as in \eqref{equation:rowOrderStd}, are
\begin{center}
$\ytableausetup{mathmode,boxsize=1em,centertableaux}
S_1=\begin{ytableau}
1 & 1 & 3 & 4 \\
2 & 2 & 4 \\
3 & 3 \\
\end{ytableau}
\qquad \qquad
S_2=\begin{ytableau}
1 & 1 & 3 & 3 \\
2 & 2 & 4 \\
3 & 4 \\
\end{ytableau}
\qquad \qquad
S_3=\begin{ytableau}
1 & 1 & 2 & 4 \\
2 & 3 & 3 \\
3 & 4 \\
\end{ytableau}$

~\\

$S_4=\begin{ytableau}
1 & 1 & 2 & 3 \\
2 & 3 & 4 \\
3 & 4 \\
\end{ytableau}
\qquad \qquad
S_5=\begin{ytableau}
1 & 1 & 2 & 3 \\
2 & 3 & 3 \\
4 & 4 \\
\end{ytableau}
\qquad \qquad
S_6=\begin{ytableau}
1 & 1 & 2 & 2 \\
3 & 3 & 3 \\
4 & 4 \\
\end{ytableau}$
\end{center}

We can now calculate the $D$-basis associated to this shape and content. We have
\begin{center}
$\begin{array}{rl}
\rbasis{S_{1}} & = S_{1} \\
\rbasis{S_{2}} & = S_{2} \\
\rbasis{S_{3}} & = S_{3} - \rbasis{S_{1}} \\
\; & = S_{3} - S_{1} \\
\rbasis{S_{4}} & = S_{4} - \rbasis{S_{1}} \\
\; & = S_{4} - S_{1} \\
\rbasis{S_{5}} & = S_{5} + \rbasis{S_{4}} - \rbasis{S_{2}} \\
\; & = S_{5} + S_{4} - S_{2} - S_{1}\\
\rbasis{S_{6}} & = S_{6} + \rbasis{S_{5}} - 2 \cdot \rbasis{S_{4}} \\
\; & = S_{6} + S_{5} - S_{4} - S_{2} + S_{1}\\
\end{array}$ 
\end{center}
Now, if we have a $F \in \fillcontent{\lambda}{z}$ such that 
\begin{center}
$\ytableausetup{mathmode,boxsize=1em,centertableaux}
F=\begin{ytableau}
2 & 1 & 1 & 3 \\
3 & 3 & 2 \\
4 & 4 \\
\end{ytableau}$
\end{center}
we may express it as a $\mathbb{Z}$-linear combination of semistandard tableaux in the quotient module $R^{\fillcontent{\lambda}{z}} / A(\lambda, z)$ using Theorem \ref{theorem:mainTheoremStr1In}. We have $\rcf{F}{S_6} = 0$, $\rcf{F}{S_5} = 1$, $\rcf{F}{S_4} = -2$, $\rcf{F}{S_3} = 0$, $\rcf{F}{S_2} = 1$, and $\rcf{F}{S_1} = -1$. Thus
\begin{center}
$\begin{array}{rl}
F &= \rbasis{S_{5}} - 2 \cdot \rbasis{S_{4}} + \rbasis{S_{2}} - \rbasis{S_{1}} \\
\; &= S_5 - S_4
\end{array}$  
\end{center}
\end{example}

\subsection{Straightening Coefficients}
\label{subsec:strCoeff}

We may also describe the coefficients $a_i$ that appear on the right hand side of the straightening of the filling $F =  \sum_ {S_i \in \stdcontent{\lambda}{z}} a_i S_i$ in $R^{\fillcontent{\lambda}{z}} / A(\lambda, z)$. We hope to use the results from this section to analyze the computational complexity of straightening in a subsequent work.

We now prove a proposition regarding the coefficients in the D-basis elements themselves and as a corollary get a formula for the coefficients in the straightening.

\begin{proposition}
\label{proposition:stdCoeffDBasis}
Let $S_i, S_j \in \stdcontent{\lambda}{z}$. Then $S_i$ shows up in $\rbasis{S_{j}}$ with coefficient equal to
\begin{equation}
\label{equation:coeffeq}
\displaystyle \sum_{\substack{(b_0,\ldots,b_d) \in \mathbb{N}_{>0}^{d+1} \\ \textrm{such that } i=b_0<b_1<\cdots<b_d=j \\ \textrm{and }d \geq 0}} (-1)^{d} \cdot \rcf{S_{b_d}}{S_{b_{d-1}}} \cdots \rcf{S_{b_1}}{S_{b_0}}\,.
\end{equation}
\end{proposition}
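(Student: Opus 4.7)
The plan is to proceed by strong induction on $j$. Let $c(i, j)$ denote the coefficient of $S_i$ in $\rbasis{S_j}$, so that $\rbasis{S_j} = \sum_i c(i,j) S_i$ in the standard tableau basis. Reading off the coefficient of $S_i$ in the defining recursion
$$\rbasis{S_j} = S_j - \sum_{k < j} \rcf{S_j}{S_k} \rbasis{S_k}$$
yields $c(i, j) = \delta_{ij} - \sum_{k < j} \rcf{S_j}{S_k} c(i, k)$. Because $\rbasis{S_k}$ is by construction a linear combination of $S_1, \ldots, S_k$, we have $c(i, k) = 0$ whenever $i > k$, so the inner sum is effectively restricted to $i \le k < j$. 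The strategy is to verify that the claimed alternating sum \eqref{equation:coeffeq} satisfies this same recursion together with the correct base case.

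For the base case $i = j$, the only admissible sequence in \eqref{equation:coeffeq} is the length-zero sequence $b_0 = i = j$, contributing $(-1)^0 \rcf{S_{b_0}}{S_i} = \rcf{S_i}{S_i}$. Since $S_i$ is itself semistandard, $\standardization{S_i} = S_i$, and Proposition \ref{proposition:rCoeffOrder}(ii) then gives $\rcf{S_i}{S_i} = 1$, matching $c(i, i) = 1$. In the auxiliary case $i > j$ there are no admissible sequences at all, matching $c(i, j) = 0$, which is consistent with the observation above.

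For the inductive step $i < j$, I would substitute the inductive hypothesis for each $c(i, k)$ with $i \le k < j$ into $c(i, j) = -\sum_{k} \rcf{S_j}{S_k} c(i, k)$. Each resulting term is indexed by a choice of such a $k$ together with a chain $i = a_0 < a_1 < \cdots < a_e = k$ of length $e \ge 0$. Setting $b_d := j$, $b_{d-1} := k$, and $(b_0, \ldots, b_{d-1}) := (a_0, \ldots, a_e)$ with $d = e + 1$ sets up a bijection between these pairs and the chains $i = b_0 < b_1 < \cdots < b_d = j$ of length $d \ge 1$ appearing in \eqref{equation:coeffeq}. Under the bijection the external factor $-\rcf{S_j}{S_k}$ becomes the new leading factor $(-1) \rcf{S_{b_d}}{S_{b_{d-1}}}$, and together with the sign $(-1)^e$ from the inductive hypothesis this produces $(-1)^{e+1} = (-1)^d$, while the innermost factor $\rcf{S_{b_0}}{S_i}$ passes through unchanged. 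The resulting expression matches \eqref{equation:coeffeq} exactly.

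There is no deep obstacle here: the only point demanding any care is the bijective reindexing and the associated sign bookkeeping in the inductive step, together with the one-line application of Proposition \ref{proposition:rCoeffOrder}(ii) that certifies $\rcf{S_i}{S_i} = 1$ in the base case.
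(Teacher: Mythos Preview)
Your proof is correct and follows essentially the same inductive strategy as the paper: both arguments unwind the recursion defining $\rbasis{S_j}$ and match terms via the obvious bijection between chains ending at $j$ and pairs (intermediate $k$, chain ending at $k$). The only cosmetic difference is that the paper inducts on an auxiliary ``depth'' invariant for $\rbasis{S_j}$, whereas you induct directly on the index $j$; your choice is arguably cleaner since it avoids introducing the depth notion, but the substance of the argument is identical.
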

\begin{proof}
We have by definition that
\begin{equation}
\label{equation:dbasiseq}
\rbasis{S_{j}} = S_{j} - \displaystyle \sum_{\substack{S_k \in \stdcontent{\lambda}{z} \\ \textrm{such that } k < j}} \rcf{S_{j}}{S_k} \cdot \rbasis{S_k}\,.
\end{equation}
If $i=j$, then $d$ is always zero in \eqref{equation:coeffeq} and the only summand is $(-1)^d = (-1)^0 = 1$. As $S_j$ appears with coefficient $1$ in \eqref{equation:dbasiseq} the proposition holds. 

In the case where $i \neq j$, we proceed by strong induction, assuming that the proposition holds for any $k < j$.  By the induction hypothesis applied to the $\rbasis{S_k}$ in \eqref{equation:dbasiseq}, we have that $S_i$ will appear in \eqref{equation:dbasiseq} with coefficient equal to
\begin{center}
$\displaystyle (-1) \cdot \sum_{k < j} \rcf{S_{j}}{S_{k}} \sum_{\substack{(b_0,\ldots,b_d) \in \mathbb{N}_{>0}^{d+1} \\ \textrm{such that } i=b_0<b_1<\cdots<b_d=k \\ \textrm{and }d \geq 0}} (-1)^{d} \cdot \rcf{S_{b_d}}{S_{b_{d-1}}} \cdot \rcf{S_{b_{d-1}}}{S_{b_{d-2}}} \cdots \rcf{S_{b_1}}{S_{b_0}}\,.$
\end{center}
It is not difficult to check that this is precisely equal to \eqref{equation:coeffeq} and hence the proposition holds.
\end{proof}
\begin{corollary}
\label{corollary:stdCoeffStr}
Suppose that $F \in \fillcontentc{\lambda}{z}$, with $F=\sum_ {S_i \in \stdcontent{\lambda}{z}}  a_i S_i$ in $R^{\fillcontent{\lambda}{z}} / A(\lambda, z)$. Then $\sorting{F}=S_k \in \stdcontent{\lambda}{z}$ for some $1 \leq k \leq \kostka{\lambda}{z}$ and
\begin{equation}
\label{eq:summationFormulaStr}
a_i = \displaystyle \sum_{\substack{(b_0,\ldots,b_d) \in \mathbb{N}_{>0}^{d+1} \\ \textrm{such that } i=b_0<b_1<\cdots<b_d \leq k \\ \textrm{and }d \geq 0}} (-1)^{d} \cdot \rcf{F}{S_{b_d}} \cdot \rcf{S_{b_d}}{S_{b_{d-1}}} \cdots \rcf{S_{b_1}}{S_{b_0}}\,.
\end{equation}
\end{corollary}
\begin{proof}
The fact that $\sorting{F}=S_k \in \stdcontent{\lambda}{z}$ for some $1 \leq k \leq \kostka{\lambda}{z}$ is simply Lemma \ref{lemma:sortingSST}. The formula for the coefficient is immediate by the preceding proposition and Corollary \ref{corollary:mainCorollaryStr}.
\end{proof}

The summation in Corollary \ref{corollary:stdCoeffStr} can be used to compute individual coefficients but it is not particularly enlightening; the vast majority of summands are zero. To get a better understanding of these coefficients we may use a graph to visualize the nonzero summands from the corollary.


Fix a partition $\lambda$ and content $z$. Then, let $G=(V,E)$ be a directed graph, with vertex set $V$ and edge set $E \subseteq V \times V$. The vertex set $V$ is the set $\stdcontent{\lambda}{z}$ of semistandard tableaux of shape $\lambda$ and content $z$. The edge set $E$ will contain the edge $(S_i,S_j)$ if and only if $i \neq j$ and $\rcf{S_i}{S_j} \neq 0$. With these definitions we see by Proposition \ref{proposition:rCoeffOrder}(i) that there are no cycles in this directed graph.

\begin{example}
\label{example:graph}
Suppose $\lambda=(3,3,2)$ and $z=(1,2,1,2,2)$. The six semistandard tableaux with this shape and content, ordered as in \eqref{equation:rowOrderStd}, are
\begin{center}
$\ytableausetup{mathmode,boxsize=1em,centertableaux}
S_1=\begin{ytableau}
1 & 2 & 4 \\
2 & 4 & 5 \\
3 & 5 \\
\end{ytableau}
\qquad \qquad
S_2=\begin{ytableau}
1 & 2 & 4 \\
2 & 3 & 5 \\
4 & 5 \\
\end{ytableau}
\qquad \qquad
S_3=\begin{ytableau}
1 & 2 & 3 \\
2 & 4 & 5 \\
4 & 5 \\
\end{ytableau}$

~\\

$S_4=\begin{ytableau}
1 & 2 & 3 \\
2 & 4 & 4 \\
5 & 5 \\
\end{ytableau}
\qquad \qquad
S_5=\begin{ytableau}
1 & 2 & 2 \\
3 & 4 & 5 \\
4 & 5 \\
\end{ytableau}
\qquad \qquad
S_6=\begin{ytableau}
1 & 2 & 2 \\
3 & 4 & 4 \\
5 & 5 \\
\end{ytableau}$
\end{center}
The vertex set $V=\{ S_1, S_2, S_3, S_4, S_5, S_6 \}$ and after computing the rearrangement coefficients we get that the edge set 
\begin{center}
$E = \{ (S_6, S_5) , (S_6, S_2) , (S_6, S_1) , (S_5, S_2) , (S_5, S_1) , (S_4, S_3) , (S_4, S_2) \}$.
\end{center}
Thus the graph $G = (V, E)$ is
\begin {center}
\begin {tikzpicture}[-latex ,auto ,node distance =3 cm and 4cm ,on grid ,
semithick , state/.style ={ circle ,top color = white , bottom color = white,
draw, black , text=black , minimum width =0.5 cm, inner sep=0pt,minimum size=1pt}]
\node[state] at (0, 0)   (S1) {\small $S_1$};
\node[state] at (1, 1)   (S2) {\small $S_2$};
\node[state] at (-1, 1)   (S3) {\small $S_3$};
\node[state] at (1, 2)  (S4) {\small $S_4$};
\node[state] at (-1, 2)   (S5) {\small $S_5$};
\node[state] at (0, 3)   (S6) {\small $S_6$};

\draw[] (S6) -- (S5) node[at start, above right] {$ $};
\draw[] (S6) -- (S2) node[at start, above right] {$ $};
\draw[] (S6) -- (S1) node[at start, above right] {$ $};
\draw[] (S5) -- (S2) node[at start, above right] {$ $};
\draw[] (S5) -- (S1) node[at start, above right] {$ $};
\draw[] (S4) -- (S3) node[at start, above right] {$ $};
\draw[] (S4) -- (S2) node[at start, above right] {$ $};
\end{tikzpicture}
\end{center}
\end{example}
For $S_i, S_j \in V$ let $\mathcal{P}(S_i,S_j)$ be the set of all paths in $G$ starting at vertex $S_i$ and ending at $S_j$. We will denote paths by a list of vertices enclosed in brackets, that is $\langle S_{p_d},S_{p_{d-1}},\ldots,S_{p_1},S_{p_0} \rangle$ indicates there is an edge from vertex $S_{p_d}$ to $S_{p_{d-1}}$, $S_{p_{d-1}}$ to $S_{p_{d-2}}$, and so on in the graph $G$. Note that we allow paths that only contain a single vertex, so for example $\mathcal{P}(S_i,S_i)= \{ \langle S_i \rangle \}$.
Now we fix an $F \in \fillcontent{\lambda}{z}$. Let $V_F \subseteq V$ be the set of vertices $S_k \in V$ such that $\rcf{F}{S_k} \neq 0$.

\begin{proposition}
\label{proposition:stdCoeffStrGraph}
Suppose that $F \in \fillcontentc{\lambda}{z}$, with $F=\sum_ {S_i \in \stdcontent{\lambda}{z}}  a_i S_i$ in $R^{\fillcontent{\lambda}{z}} / A(\lambda, z)$. Then 
\begin{center}
$a_i = \displaystyle \sum_{\substack{S_j \in V_F \\ \langle S_{p_d},\ldots,S_{p_0} \rangle \in \mathcal{P}(S_j,S_i)}} (-1)^{d} \cdot \rcf{F}{S_{p_d}} \cdot \rcf{S_{p_d}}{S_{p_{d-1}}} \cdots \rcf{S_{p_{1}}}{S_{p_0}}$.
\end{center}
\end{proposition}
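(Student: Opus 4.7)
The plan is to show that the proposition's formula is simply a graph-theoretic repackaging of Corollary \ref{corollary:stdCoeffStr}: the strictly increasing index tuples $(b_0, \ldots, b_d)$ appearing there are in weight-preserving bijection with pairs $(S_j, p)$, where $S_j \in V_F$ and $p \in \mathcal{P}(S_j, S_i)$.

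The key preliminary observation is that every edge of $G$ strictly decreases index. Indeed, if $(S_a, S_b) \in E$, then $a \neq b$ and $\rcf{S_a}{S_b} \neq 0$; applying Proposition \ref{proposition:rCoeffOrder}(i) with $F = S_a$ (so that $\standardization{F} = S_a$) forces $S_b \succeq S_a$, i.e.\ $b \leq a$, and together with $a \neq b$ this yields $b < a$. Consequently, any path $p = \langle S_{p_d}, S_{p_{d-1}}, \ldots, S_{p_0} \rangle$ in $G$ has strictly decreasing indices $p_d > p_{d-1} > \cdots > p_0$, which after relabeling $b_j := p_j$ matches the strict inequality $b_0 < b_1 < \cdots < b_d$ in the corollary.

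I would then start from Corollary \ref{corollary:stdCoeffStr} and perform two simplifications. By Proposition \ref{proposition:rCoeffOrder}(ii) the boundary factor $\rcf{S_{b_0}}{S_i}$ equals $\rcf{S_i}{S_i} = 1$ (since $b_0 = i$), matching the corresponding factor $\rcf{S_{p_0}}{S_i} = 1$ in the proposition. Next, the upper bound $b_d \leq k$ becomes redundant once zero summands are dropped: Proposition \ref{proposition:rCoeffOrder}(i) forces $\rcf{F}{S_{b_d}} = 0$ whenever $b_d > k$, so the bound $b_d \leq k$ is equivalent to $\rcf{F}{S_{b_d}} \neq 0$, i.e.\ $S_{b_d} \in V_F$. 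After discarding the zero summands, each surviving tuple satisfies $b_0 = i$, $S_{b_d} \in V_F$, and $\rcf{S_{b_j}}{S_{b_{j-1}}} \neq 0$ for every $j \geq 1$; combined with $b_j \neq b_{j-1}$, this last condition is precisely that $(S_{b_j}, S_{b_{j-1}})$ is an edge of $G$. The tuple therefore encodes the path $\langle S_{b_d}, \ldots, S_{b_0} \rangle \in \mathcal{P}(S_{b_d}, S_i)$ of length $d$, and the correspondence is a bijection by the preliminary observation. Matching $(-1)^d$ with $(-1)^{l(p)}$ term by term and grouping by $S_j := S_{b_d}$ reproduces the proposition's formula exactly.

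I do not anticipate any serious obstacle; the argument is essentially a translation from the language of strictly increasing index tuples into paths in $G$. The one conceptually nontrivial ingredient is the orientation statement that edges in $G$ always decrease index, but that is an immediate consequence of Proposition \ref{proposition:rCoeffOrder}.
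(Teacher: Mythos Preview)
Your proposal is correct and follows the same approach as the paper: the paper's proof is a one-liner observing that a path from some $S_j \in V_F$ to $S_i$ exists if and only if there is a corresponding nonzero summand in Corollary \ref{corollary:stdCoeffStr}, and you have simply spelled out this bijection carefully using Proposition \ref{proposition:rCoeffOrder}. The only minor wording issue is the phrase ``the bound $b_d \leq k$ is equivalent to $\rcf{F}{S_{b_d}} \neq 0$,'' which is not literally an equivalence; but your surrounding explanation makes clear the correct point, namely that tuples with $b_d > k$ contribute zero, so the constraint can be dropped and then absorbed into the condition $S_{b_d} \in V_F$.
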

\begin{proof}
Let $b_0<b_1<\cdots<b_d$ be a chain of inequalities indexing a summand from \eqref{eq:summationFormulaStr}. The corresponding summand in \eqref{eq:summationFormulaStr} is nonzero if and only if $\rcf{F}{S_{b_d}},\rcf{S_{b_d}}{S_{b_{d-1}}},\ldots,\rcf{S_{b_1}}{S_{b_0}}$ are all nonzero. A directed edge from a vertex $S_k$ to a vertex $S_l$ in $G$ can only exist if $l \leq k$. Hence $\rcf{F}{S_{b_d}},\rcf{S_{b_d}}{S_{b_{d-1}}},\ldots,\rcf{S_{b_1}}{S_{b_0}}$ are all nonzero if and only if $S_{b_d} \in V_F$ and there is a path $\langle S_{b_d},S_{b_{d-1}},\ldots,S_{b_1},S_{b_0} \rangle$ in $G$.
\end{proof}

\begin{example}
As in the previous example let $\lambda=(3,3,2)$ and $z=(1,2,1,2,2)$. Additionally, let $S_1,\ldots,S_6$ be defined as they were in that example. Let $F \in \fillcontentc{\lambda}{z}$ with
\begin{center}
$\ytableausetup{mathmode,boxsize=1em,centertableaux}
F=\begin{ytableau}
2 & 2 & 1 \\
4 & 3 & 5 \\
5 & 4 \\
\end{ytableau}$
\end{center}

We may find the coefficient $a_1$ of the term $S_1$ in the straightening of $F$ using the graph $G$ from Example \ref{example:graph}. We have $\rcf{F}{S_6}=0, \rcf{F}{S_5}=1,\rcf{F}{S_4}=0,\rcf{F}{S_3}=-1,\rcf{F}{S_2}=-1$, and $\rcf{F}{S_1}=2$. Thus $V_F = \{ S_5, S_3, S_2, S_1 \}$. We have the following paths from these vertices to $S_1$ in the graph $G$.
\begin{center}
$\mathcal{P}(S_5,S_1) = \{ \langle S_5, S_1 \rangle \},\; \mathcal{P}(S_3,S_1) = \emptyset,\; \mathcal{P}(S_2,S_1) = \emptyset,\; \mathcal{P}(S_1,S_1) = \{ \langle S_1 \rangle \}$
\end{center}

Then according to Proposition \ref{proposition:stdCoeffStrGraph} we have two summands, one for each path listed above, giving us
\begin{center}
$\begin{array}{rl}
a_1 & = (-1)^1 \cdot \rcf{F}{S_5} \cdot \rcf{S_5}{S_1} \cdot \rcf{S_1}{S_1}  + (-1)^0 \cdot \rcf{F}{S_1} \cdot \rcf{S_1}{S_1} \\
\;  & = -1 \cdot 1 \cdot 1 \cdot 1 + 1 \cdot 2 \cdot 1 = 1 \\
\end{array}$
\end{center}
\end{example}

\bibliographystyle{alpha}
\bibliography{straightencoeff}

\end{document}